\theoremstyle{plain}
\newtheorem{theorem}{Theorem}
\newtheorem{lemma}[theorem]{Lemma}
\theoremstyle{definition}
\newtheorem{definition}[theorem]{Definition}
\newtheorem{remark}[theorem]{Remark}
\let\d\undefined
\let\H\undefined
\newcommand*{\N}{\mathbb{N}}
\newcommand*{\R}{\mathbb{R}}
\newcommand*{\d}{\mathrm{d}}
\newcommand{\der}{\mathrm{d}}
\newcommand*{\norm}[1]{\left\lVert#1\right\rVert}
\newcommand{\ip}[2]{\left\langle#1,#2\right\rangle}
\newcommand{\iip}[2]{\left(#1,#2\right)}
\newcommand{\H}{\mathcal{H}}
\newcommand{\V}{\mathcal{V}}
\newcommand{\eps}{\varepsilon}
\newcommand{\Order}{\mathcal{O}}
\newcommand{\dooin}{\partial_{\mathrm{in}}}
\newcommand{\dooout}{\partial_{\mathrm{out}}}
\newcommand{\grad}[1]{\overset{\mathtt{#1}}{\nabla}}
\newcommand{\dive}[1]{\overset{\mathtt{#1}}{\operatorname{div}}}
\newcommand{\abs}[1]{\left\vert#1\right\vert}
\newcommand{\alf}[1]{\accentset{\alpha}{#1}}
\newcommand{\bet}[1]{\accentset{\beta}{#1}}
\newcommand{\alfGrad}[1]{\grad{#1}_{\!\scriptscriptstyle\alpha}}
\newcommand{\alfDive}[1]{\dive{#1}_{\scriptscriptstyle\alpha}}
\newcommand{\SSob}{K^2}
\DeclareMathOperator{\Lip}{Lip}
\DeclareMathOperator{\adj}{adj}
\DeclareMathOperator{\sisus}{int}
\newcommand{\NTR}[1]{}
\title[Pestov identities and X-ray tomography in low regularity]{Pestov identities and X-ray tomography on manifolds of low regularity}
\author{Joonas Ilmavirta}
\address{Department of Mathematics and Statistics\\
University of Jyv\"askyl\"a\\
P.O. Box 35 (MaD)\\
FI-40014 University of Jyv\"askyl\"a, Finland\\
\texttt{joonas.ilmavirta@jyu.fi}}
\author{Antti Kykkänen}
\address{Department of Mathematics and Statistics\\
University of Helsinki\\
P.O. Box 68 (Gustaf H\"allstr\"omin katu 2B)\\
FI-00014 University of Helsinki, Finland\\
\texttt{antti.k.kykkanen@jyu.fi}}
\date{\today}
\keywords{Non-smooth geometry, X-ray tomography, integral geometry, inverse problems}
\subjclass[2010]{44A12, 53C22, 53C65, 58J32}
\begin{document}

\maketitle

\begin{abstract}
We prove that the geodesic X-ray transform is injective on scalar functions and (solenoidally) on one-forms on simple Riemannian manifolds $(M,g)$ with $g \in C^{1,1}$. In addition to a proof, we produce a redefinition of simplicity that is compatible with rough geometry. This $C^{1,1}$-regularity is optimal on the H\"older scale. The bulk of the article is devoted to setting up a calculus of differential and curvature operators on the unit sphere bundle atop this non-smooth structure.
\end{abstract}

\section{Introduction}

\NTR{We have indicated with these footnotes all changes made to the article based on the referees' suggestions.}
\NTR{Updated email.}

How regular does a Riemannian metric have to be for the geodesic X-ray transform to be injective?
It is well known (see e.g.~\cite{MukhometovRPTDRM,MukhometovOPRRM,RomanovIPMP,ARUDFFDIAG}) that on a smooth simple Riemannian manifold this injectivity property holds.
If the regularity is too low, the question itself falls apart:
If the Riemannian metric is~$C^{1,\alpha}$ for~$\alpha<1$, then the geodesic equation can fail to have unique solutions~\cite{HartmanLUG, SSGLR}.
Therefore it is indeed in a sense optimal on the H\"older scale when we prove that on a~$C^{1,1}$-smooth simple Riemannian manifold the geodesic X-ray transform is injective on scalars and one-forms, the latter one up to natural gauge.

The geodesic X-ray transform is ubiquitous in the theory of geometric inverse problems.
It appears either directly or through linearization in many imaging problems of anisotropic and inhomogeneous media.
Most inverse problems have been studied in smooth geometry but the nature is not smooth.
The irregularities of the structure of the Earth range from individual rocks (zero-dimensional, small) to interfaces like the core--mantle boundary (two-dimensional, global scale).
Irregularity across various scales and dimensions are most conveniently captured in a single geometric structure of minimal regularity assumptions.
Specific kinds of irregularities can well be analyzed further, but we restrict our attention to a uniform and global but low regularity.

We prove this injectivity result by using a Pestov identity, an approach that can well be called classical (cf.~\cite{MukhometovRPTDRM,MukhometovOPRRM,RomanovIPMP,ARUDFFDIAG,PSUTTPC,IMIGMBA,UhlmannIPSU,PSUGIPETD}).
What requires care is keeping track of regularity.
The manifold does not have natural structure beyond~$C^{1,1}$, so regularity beyond is both useless and inaccessible.
The natural differential operators on the manifold and its unit sphere bundle are not smooth, and only a couple of derivatives of any kind can be taken at all.
The various commutators that appear in the calculations have to be interpreted in a suitable way, so that~$[A,B]$ exists reasonably even when the products~$AB$ and~$BA$ do not.
We employ two methods around these obstacles: approximation by smooth structures and careful analysis in the non-smooth geometry.

We say that a function is in the class~$C^{1,1}$ if it is continuously differentiable and the derivative is Lipschitz, and we define in definition~\ref{def:simple-b} what a~$C^{1,1}$ simple Riemannian metric is.
\NTR{Replaced the first sentence.}
Throughout the article our manifolds are assumed to be connected and to have dimension~$n \ge 2$.

\begin{theorem}
\label{thm:c11-injectivity}
Let~$(M,g)$ be a simple~$C^{1,1}$ manifold in the sense of definition~\ref{def:simple-b}.
\begin{enumerate}[(1)]
    \item\label{injectivity-claim-1} If~$f$ is a Lipschitz function on~$M$ that integrates to zero over all maximal geodesics of~$M$, then~$f = 0$.
    \item\label{injectivity-claim-2} Let~$h$ be a Lipschitz~$1$-form on~$M$ that vanishes on the boundary $\partial M$.
    Then~$h$ integrates to zero over all maximal geodesics of~$M$ if and only if there is a scalar function~$p \in C^{1,1}(M)$ vanishing on the boundary~$\partial M$ so that~$h = \d p$.
\end{enumerate}
\end{theorem}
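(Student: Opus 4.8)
The plan is to run the classical Pestov-identity energy argument on the unit sphere bundle $SM$, taking care that every operator and every integration by parts retains its meaning once the metric drops to $C^{1,1}$. For the scalar claim~\ref{injectivity-claim-1}, given a Lipschitz $f$ whose integral over every maximal geodesic vanishes, I would form the integral function $u(x,v)=\int_{0}^{\tau(x,v)} f(\gamma_{x,v}(t))\,\d t$ on $SM$, with $\tau$ the exit time of $\gamma_{x,v}$. The hypothesis on the data says precisely that $u$ vanishes on all of $\partial SM$, and differentiating along the geodesic flow yields the transport equation $Xu=-f$, where $X$ is the geodesic vector field. Since $f$ is a pullback from $M$ it is fiberwise constant, so $\grad{v}(Xu)=0$; feeding this into the Pestov identity in its integrated form
\begin{equation*}
\norm{\grad{v}Xu}^2=\norm{X\grad{v}u}^2-\ip{R\grad{v}u}{\grad{v}u}+(n-1)\norm{Xu}^2,
\end{equation*}
with $R$ the curvature operator, collapses it to $(n-1)\norm{Xu}^2=\ip{R\grad{v}u}{\grad{v}u}-\norm{X\grad{v}u}^2$. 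Simplicity in the sense of definition~\ref{def:simple-b} enters through the absence of conjugate points, which makes the index form nonnegative and forces the right-hand side to be $\le 0$. Hence $Xu=0$ and $f=0$.

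For the one-form claim~\ref{injectivity-claim-2} the gauge (``if'') direction is immediate: if $h=\d p$ with $p|_{\partial M}=0$, then $\int_\gamma \d p$ telescopes to the difference of $p$ at the two endpoints of $\gamma$, both lying on $\partial M$, and so vanishes. For the converse I would again integrate, now setting $u(x,v)=\int_0^{\tau}\ell_h(\phi_t(x,v))\,\d t$ with $\ell_h(x,v)=h_x(v)$ the restriction of $h$ to $SM$; this solves $Xu=-\ell_h$ with $u|_{\partial SM}=0$. The new ingredient is that $\ell_h$ is fiberwise a spherical harmonic of degree one, so $\norm{\grad{v}Xu}^2=(n-1)\norm{\ell_h}^2=(n-1)\norm{Xu}^2$, which cancels the last term of the Pestov identity and leaves $\norm{X\grad{v}u}^2=\ip{R\grad{v}u}{\grad{v}u}$. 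This is the rigidity case of the index-form inequality; controlling the boundary contribution via $u|_{\partial SM}=0$, I would conclude $\grad{v}u=0$, that is $u=\pi^*p$ for some $p$ on $M$. Then $Xu=-\ell_h$ reads $\d p=-h$, whence $h=\d(-p)$, and $u|_{\partial SM}=0$ together with the Lipschitz regularity of $h$ gives $p\in C^{1,1}(M)$ with $p|_{\partial M}=0$.

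The genuine difficulty is legitimacy in $C^{1,1}$ rather than the algebra. With $g\in C^{1,1}$ the Christoffel symbols are only Lipschitz, so $X$ is a merely Lipschitz vector field on $SM$, $R$ is an $L^\infty$ object, and $u$ inherits only borderline second-order regularity; the products $X\grad{v}u$ and $\grad{v}Xu$, and above all the commutator $[X,\grad{v}]$ that powers the Pestov identity, need not make classical sense term by term. I would resolve this in the two complementary ways the paper sets up. First, approximate $g$ by smooth simple metrics with uniform $C^{1,1}$ bounds, establish the identities and the index-form inequality for each, and pass to the limit using the uniform bounds and the stability of geodesics and of the flow. Second, justify the Pestov identity and the integrations by parts directly in the rough setting by defining $[X,\grad{v}]$ so that it persists even when $X\grad{v}$ and $\grad{v}X$ individually do not, and by controlling boundary terms through the convexity in definition~\ref{def:simple-b}. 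I expect the real work to lie in showing that the approximation respects simplicity and that the limiting index-form inequality, and its rigidity case in the one-form argument, survive the passage to the limit.
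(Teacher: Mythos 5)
Your proposal follows essentially the same route as the paper: form the integral function, verify it has just enough regularity to enter the Pestov identity, exploit $\grad{v}Xu^f=0$ for scalars and the degree-one cancellation $\norm{\grad{v}\tilde h}^2=(n-1)\norm{\tilde h}^2$ for one-forms, and close the argument by approximation with smooth metrics plus a weak formulation of $[X,\grad{v}]=-\grad{h}$. The only cosmetic difference is that you invoke absence of conjugate points and an index-form inequality where the paper's definition of $C^{1,1}$ simplicity directly postulates the coercivity $Q(W)\ge\varepsilon\norm{W}^2_{L^2(N)}$, so the positivity and rigidity steps are definitional rather than derived.
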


We have to redefine simplicity to be tractable in our rough setup, and we regard this new definition as one of our main results.
To verify that our redefinition is a valid one, we prove that it agrees with the classical definition when the metric is smooth.
The classical definition of a smooth simple manifold implies the existence of global coordinates, but in the~$C^{1,1}$ case we assume the coordinates in the definition --- in light of the following theorem the coordinate assumption is not superfluous.\NTR{Added this sentence to clarify the role of global coordinates.}

\begin{theorem}
\label{thm:c11-simplicity}
In smooth geometry definitions~\ref{def:simple-a} and~\ref{def:simple-b} are equivalent in the following sense:
\begin{enumerate}
\item
If~$M$ is a simple~$C^\infty$ Riemannian manifold (see definition~\ref{def:simple-a}), then it is diffeomorphic to a closed ball in $\R^n$ and it is a simple~$C^{1,1}$ Riemannian manifold (see definition~\ref{def:simple-b}).
\item
If~$M$ is a simple~$C^{1,1}$ Riemannian manifold (see definition~\ref{def:simple-b}) and its metric tensor is~$C^\infty$-smooth, then~$M$ is a smooth simple Riemannian manifold (see definition~\ref{def:simple-a}).
\end{enumerate}
\end{theorem}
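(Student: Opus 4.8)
The plan is to prove the two implications separately, in each case unwinding both definitions into their constituent clauses (strict convexity of the boundary, absence of conjugate points, and the global diffeomorphism property of the exponential map together with the ball structure) and checking that each clause of one definition follows from the corresponding clause of the other. Since $C^\infty \subset C^{1,1}$, the regularity hypothesis is never the obstruction; all of the content lies in reconciling the \emph{geometric} conditions, which are phrased differently in the smooth and the rough settings.

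For part~(1) I would start from a smooth simple manifold and first produce the diffeomorphism to a closed ball. Fix an interior point $p \in M$; by smooth simplicity the exponential map $\exp_p$ is a diffeomorphism from a compact star-shaped domain $D_p \subset T_p M$, itself diffeomorphic to a closed ball, onto $M$. Composing with a linear identification $T_p M \cong \R^n$ and a radial rescaling yields the desired diffeomorphism $M \cong \overline{B}^n$. With the ball structure in hand I would then verify clause by clause that $M$ satisfies definition~\ref{def:simple-b}: strict convexity of $\partial M$ transfers directly, since the smooth second fundamental form is exactly the $L^\infty$ one used in the rough definition; the no-conjugate-points condition of~\ref{def:simple-b}, whatever surrogate form it takes (nondegeneracy of the relevant Jacobi fields, or local invertibility of $\exp$), is implied by the genuine absence of conjugate points in the smooth setting; and the global unique-geodesic requirement follows from the same property of $\exp_p$.

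For part~(2) the metric is now smooth, so the entire geodesic apparatus --- geodesics, Jacobi fields, the exponential map, and the second fundamental form --- is smooth, and I may use the classical theory freely. The task is to upgrade the a priori weaker, low-regularity conditions of~\ref{def:simple-b} to the clauses of~\ref{def:simple-a}. Strict convexity matches immediately because both are expressed through the now-smooth second fundamental form. The heart of the argument is deducing the classical no-conjugate-points condition and the diffeomorphism property of $\exp$ from the surrogate condition in~\ref{def:simple-b}: I would argue that unique geodesics whose endpoint dependence is nondegenerate force the differential of $\exp_p$ to be everywhere nonsingular, whence, using strict convexity and compactness, $\exp_p$ is a global diffeomorphism onto $M$, which is precisely smooth simplicity.

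I expect the main obstacle to be exactly this bridge between the two formulations of ``no conjugate points / $\exp$ is a diffeomorphism.'' In low regularity one cannot phrase this through curvature, which is only $L^\infty$, so definition~\ref{def:simple-b} must encode it through some more robust surrogate --- convexity of a distance-type function, unique extendable geodesics, or nondegeneracy of Jacobi fields solving an ODE with merely bounded coefficients. Showing that this surrogate is genuinely equivalent to the smooth condition when the metric happens to be smooth, in both directions and with the correct treatment of the boundary, is where the real care is required; by comparison the ball-structure and convexity parts are routine.
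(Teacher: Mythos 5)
There is a genuine gap: your proposal never engages with what definition~\ref{def:simple-b} actually says, and as a result the two steps that carry all of the content of the theorem are missing. Definition~\ref{def:simple-b} does \emph{not} contain an $L^\infty$ version of strict convexity of the boundary, nor a ``surrogate no-conjugate-points condition'' phrased through Jacobi fields or local invertibility of $\exp$. Its clauses are: \ref{b1} the uniform positivity $Q(W)\ge\eps\norm{W}^2_{L^2(N)}$ of the quadratic form $Q(W)=\norm{XW}^2_{L^2(N)}-\iip{RW}{W}_{L^2(N)}$ appearing in the Pestov identity; \ref{b2} unique interior geodesics with \emph{continuous} dependence of length on endpoints; and \ref{b3} $\tau^2\in\Lip(SM)$. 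So the clause-by-clause matching you propose (``strict convexity transfers directly since the smooth second fundamental form is exactly the $L^\infty$ one'') does not apply: there is no second fundamental form in definition~\ref{def:simple-b}, and the correspondence between \ref{a1} and \ref{b3} is a nontrivial equivalence (lemma~\ref{lma:conv-bnd-equiv-tau2-lip}) requiring an asymptotic analysis of the travel time near glancing directions, $\tau(\hat x,\hat v)=2v^{\perp}S(\hat v^{\parallel},\hat v^{\parallel})^{-1}+\Order((\hat v^{\perp})^2)$, in one direction, and in the other direction an explicit construction of a family of geodesics along which $\tau_h\gtrsim h^{1/3}$, contradicting $\tau_h^2\le Bh$, when convexity fails.

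You correctly flag that ``the bridge between the two formulations of no conjugate points'' is the main obstacle, but you do not build it, and the bridge you sketch (nondegeneracy of endpoint dependence forces $\der\exp_p$ nonsingular) is not the one the definition calls for. What must actually be shown is that \ref{b1} implies the absence of conjugate points (the paper's lemma~\ref{lma:posit-q-implies-no-conj}, proved by concentrating sections $W_k$ of $N$ along a single geodesic via Santal\'o's formula so that $Q(W_k)$ converges to the index form $I_{\gamma_0}(V)$, forcing $I_{\gamma_0}(V)>0$), after which \ref{b2} upgrades to \ref{a2}; and conversely that smooth simplicity implies the uniform lower bound \ref{b1} on $Q$ (a known index-form estimate, cited in the paper). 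The part of your plan that does work is the routine part: the diffeomorphism of a smooth simple manifold onto the closed ball via $\exp_p$, and the observation that \ref{b2} is weaker than \ref{a2}. Everything else --- the two lemmas that constitute the proof --- is absent.
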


\begin{remark}
\label{rmk:h=0}
The assumption $h|_{\partial M}=0$ in claim~\ref{injectivity-claim-2} of theorem~\ref{thm:c11-injectivity} is probably not necessary.
Not assuming this is fine in smooth geometry but leads to technical difficulties in our rough setup.
This added assumption is the only way in which our results fail to correspond to the classical smooth results.
\end{remark}

\subsection{Related results}

Geodesic X-ray transforms have been studied a lot on smooth manifolds equipped with~$C^{\infty}$-smooth Riemannian metrics.
Injectivity of the transform is reasonably well understood both on manifolds with a boundary and on closed manifolds.
On manifolds with boundary one integrates over maximal geodesics between two boundary points, whereas on closed manifolds one integrates over periodic geodesics.

After Mukhometov's introduction of the Pestov identity for scalar tomography~\cite{MukhometovIKPSP,MukhometovRPTDRM,MukhometovOPRRM}, the method has been applied to~$1$-forms and higher order tensor fields~\cite{ARUDFFDIAG,PSIGOTFMNC,PSUTTS,PSUIDBTTT} on many simple manifolds.
When one passes from simple manifolds with boundary to closed Anosov manifolds, the Pestov identity remains the same but the other tools around it change somewhat~\cite{CSSRCNCM,DSSPIGAM,PSUSRIDAS,PSUIDBTTT,UhlmannDBRSRRSNFP}.
Cartan--Hadamard manifolds are a non-compact analogue of simple manifolds, and the familiar Pestov identity works well~\cite{LehtonenGRTTDCHM,LRSTTCHM}.
Other variations of the problem change the Pestov identity, but a variant remains true and useful:
In the presence of reflecting rays a boundary term on the reflector is added~\cite{ISBRTRSCO,IPBRTORO},
an attenuation or a Higgs field~\cite{SUARTSS,PSUARTCHF,GPSXRTCNC} and magnetic flows~\cite{DPSUBRPPMF,AinsworthAMRTS,MPIPGD} add a term to the geodesic vector field,
non-abelian\NTR{Added hyphen.} versions of the problem remove the concept of a line integral entirely~\cite{FUXRTNACTD,PSNAXRTS,MNP:non-abelian}\NTR{Added one reference.},
and on Finsler surfaces a number of new terms are needed to account for non-Riemannian geometry~\cite{ADXRTGFCFS}.
On pseudo-Riemannian manifolds a Pestov identity useful for the light ray transform only seems to exist in product geometry of at least~$2+2$ dimensions~\cite{IlmavirtaXRTPR}.

Pestov identities are not the only tool in the box for studying ray transforms on manifolds.
For the variety of other methods we refer the reader to the review~\cite{IMIGMBA}.

Inverse problems in integral geometry have been mostly studied on manifolds whose Riemannian metric is smooth or otherwise substantially above our~$C^{1,1}$ in regularity.
Injectivity of the scalar X-ray transform is known on spherically symmetric manifolds of regularity~$C^{1,1}$ satisfying the so-called Herglotz condition when the conformal factor of the metric is in~$C^{1,1}$~\cite{HIKSRSSMB}.

Some geometric inverse problems outside integral geometry have been solved in low regularity.
A manifold with a metric tensor in a suitable Zygmund class is determined by its boundary spectral data~\cite{AKKLTBRREGCGIBP},
interior spectral data~\cite{BKLRSGIISP} or by its boundary distance function~\cite{KKLSBDRRRM}.

\subsection{Preliminaries}
\label{subsec:preliminaries}

In this subsection we will set up enough language to be able to state our definitions and give our proofs on a higher level.
For a similar framework in the traditional smooth setting, see e.g.~\cite{PSUIDBTTT}.
We will cover the foundations in more detail in section~\ref{sec:preliminaries} before embarking on the detailed proofs of our key lemmas.

The Riemannian manifold~$(M,g)$, where~$g$ is~$C^{1,1}$ regular, comes equipped with the unit sphere bundle~$\pi \colon SM \to M$.
The geodesic flow is a dynamical system on~$SM$ and its generator~$X$ is called the geodesic vector field.
Properties and coordinate representations of~$X$ will be given later.

We will make frequent use of the bundle~$N$ over~$SM$ defined next.
If~$\pi^{\ast}TM$ is the pullback of~$TM$ over~$SM$, then~$N$ is the subbundle of~$\pi^{\ast}TM$ with fibers $N_{(x,v)} = \{v\}^{\perp} \subseteq T_xM$. It is well known (see~\cite{PaternainGF}) that the tangent bundle~$TSM$ of~$SM$ has an orthogonal splitting
\begin{equation}
\label{eq:hv-split}
TSM = \R X \oplus \H \oplus \V
\end{equation}
with respect to the so-called Sasaki metric, where~$\H$ and~$\V$ are called horizontal and vertical subbundles respectively.
Roughly speaking,~$\H_{(x,v)}$ corresponds to derivatives on~$SM$ in the base without components in the direction of~$v$ and~$\V_{(x,v)}$ corresponds to derivatives on a fiber~$S_xM$.
It is natural to identify~$\H_{(x,v)} = N_{(x,v)}$ and~$\V_{(x,v)} = N_{(x,v)}$.

Given~$z \in SM$, let~$\gamma_z$ be the unique geodesic corresponding to the initial condition~$z$.
We define the geodesic flow to be the collection of (partially defined) maps~$\phi_t \colon SM \to SM$,~$\phi_t(z) = (\gamma_z(t),\dot\gamma_z(t))$, where~$t$ goes through the values for which the right side is defined on~$SM$.
For any~$z \in SM$ the geodesic~$\gamma_z$ is defined on a maximal interval $[\tau_-(z),\tau_+(z)]$.
The travel time function~$\tau \colon SM \to \R$ describes the first time a geodesic exists the manifold and it is defined by~$\tau(z) = \tau_+(z)$ for~$z \in SM$.
Clearly $\gamma_z(\tau(z)) \in \partial M$ for any~$z \in SM$.

A function~$f$ on~$M$ can be\NTR{Fixed typo.} identified with the function~$\pi^{\ast}f$ on~$SM$.
If~$h$ is a $1$-form on~$M$, then it can be considered as a function~$\tilde h \colon SM \to \R$ through the identification~$\tilde h(x,v) = h_x(v)$ for~$(x,v) \in SM$.
Since~$h_x \colon T_xM \to \R$ is linear,~$\tilde h$ uniquely corresponds to~$h$.
The integral function~$u^f \colon SM \to \R$ of $f\in\Lip(SM)$ is defined by
\begin{equation}
\label{eq:uf}
u^f(x,v)
\coloneqq
\int_0^{\tau(x,v)}
f(\phi_t(x,v))
\,\d t
\end{equation}
for all $(x,v)\in SM$.

The lift of a unit speed curve $\gamma\colon I\to M$ is $\tilde\gamma\colon I\to SM$ given by $\tilde\gamma(t)=(\gamma(t),\dot\gamma(t))$.
The curve~$\gamma$ is a geodesic if and only if the lift satisfies $\dot{\tilde\gamma}(t) = X(\tilde\gamma(t))$.
The geodesic vector field~$X$ acts naturally on scalar fields by differentiation, and on sections~$V$ of~$N$ it acts by\NTR{Fixed typo.}
\begin{equation}
XV(z) = D_tV(\phi_t(z))|_{t = 0},
\end{equation}
where~$D_t$ is the covariant derivative along the curve~$t \mapsto \gamma_z(t)$.
This operator maps indeed sections of~$N$ to sections of~$N$.

According to~\eqref{eq:hv-split} the gradient of a~$C^1$ function~$u$ on~$SM$ we can be written as
\begin{equation}
\nabla_{SM}u = (Xu)X + \grad{h}u + \grad{v}u.
\end{equation}
This gives rise to two new differential operators~$\grad{v}$ and~$\grad{h}$, called, respectively, the vertical and the horizontal gradient.
Both~$\grad{v}u$ and~$\grad{h}u$ are naturally interpreted as sections of~$N$; see~\cite{PSUGIPETD} for details.
There are natural~$L^2$ spaces for functions on the sphere bundle as well as for the sections of the bundle~$N$.
These will be denoted~$L^2(SM)$ and~$L^2(N)$ and we will often label the corresponding inner products explicitly.
Formal adjoints of~$\grad{v}$ and~$\grad{h}$ with respect to appropriate~$L^2$ inner products are the vertical and horizontal divergences~$-\dive{v}$ and~$-\dive{h}$ respectively.
The mapping properties of the operators in~$C^{1,1}$ regular metric setting are
\begin{align}
X &\colon C^1(SM) \to C(SM)
\\
X &\colon C^1(N) \to C(N),
\\
\grad{v},\grad{h} &\colon C^1(SM) \to C(N), \quad\text{and}
\\
\dive{v}, \dive{h} &\colon C^1(N) \to C(SM).
\end{align}
These mapping properties are easily verified by inspecting the explicit formulas in local coordinates; see section~\ref{sec:preliminaries}.

We will deal with Sobolev spaces~$H^1_{(0)}(SM)$ and~$H^1_{(0)}(N)$ defined as completions of~$C^1_{(0)}$ regular functions or sections in the relevant norms (see section~\ref{sec:preliminaries}), where the optional subscript~$0$ indicates zero boundary values.
Similarly, we denote by~$\Lip_0(M)$ and~$\Lip_0(SM)$ the spaces of Lipschitz functions zero boundary values.\NTR{Added this definition of $\Lip_0$.}
As the last function space we introduce a Sobolev space~$H^1_{(0)}(N,X)$, which only gives control over the operator~$X$ operating on sections of~$N$. From definitions of various Sobolev norms it will be clear that all differential operators are bounded~$H^1 \to L^2$ and thus extend to operators between Sobolev spaces.

Finally, there is a special quadratic form~$Q$ appearing in the Pestov identity.
To define it, we use the Riemannian curvature tensor
$R
\colon
L^{\infty}(N)
\to
L^{\infty}(N)
$
acting on sections of~$N$ by
\begin{equation}
R(x,v)V(x,v)
=
R(V(x,v),v)v.
\end{equation}
In order to verify the mapping property of~$R$, observe that the second partial derivatives of $g\in C^{1,1}=W^{2,\infty}$ are in~$L^{\infty}$.
We define~$Q$ by letting
\begin{equation}
Q(W)
=
\norm{XW}^2_{L^2(N)}
-
\iip{RW}{W}_{L^2(N)}.
\end{equation}
for all~$W \in H^1(N,X)$.

To conclude the preliminaries we recall in definition~\ref{def:simple-a} the traditional definition of a simple Riemannian manifold (cf.~\cite{PSUIDBTTT}).
In what follows a manifold satisfying conditions~\ref{a1} and~\ref{a2} is called \emph{simple~$C^\infty$ manifold}.
In definition~\ref{def:simple-b} we redefine the notion of simplicity on manifolds equipped with non-smooth Riemannian metrics.

\begin{definition}[Simple~$C^\infty$ manifold]
\label{def:simple-a}
Let~$(M,g)$ be a compact smooth Riemannian manifold with a smooth boundary. The manifold~$(M,g)$ is called \emph{simple~$C^\infty$ Riemannian manifold}, if the following hold:
\begin{enumerate}[label=A{{\arabic*}}:, ref=A{{\arabic*}}]
    \item\label{a1} The boundary~$\partial M$ is strictly convex in the sense of the second fundamental form.      
    \item\label{a2} Any two points on~$M$ can be joined by a unique geodesic in the interior of~$M$, and its length depends smoothly on its end points.
\end{enumerate}
\end{definition}

\begin{definition}[Simple~$C^{1,1}$ manifold]
\label{def:simple-b}
Let~$M\subseteq\R^n$ be the closed unit ball and~$g$ a~$C^{1,1}$ regular Riemannian metric on~$M$.
We say that~$(M,g)$ is a \emph{simple~$C^{1,1}$ Riemannian manifold} if the following hold:
\begin{enumerate}[label=B{{\arabic*}}:, ref=B{{\arabic*}}]
    \item\label{b1} There is~$\varepsilon > 0$ so that ~$Q(W) \ge \varepsilon\norm{W}^2_{L^2(N)}$ for all~$W \in H_0^1(N,X)$.
    \item\label{b2} Any two points of~$M$ can be joined by a unique geodesic in the interior of~$M$, whose length depends continuously on its end points.
    \item\label{b3} The function~$\tau^2$ is Lipschitz on~$SM$.
\end{enumerate}
\end{definition}

\begin{remark}
\label{rem:global-coords}
In definition~\ref{def:simple-b} the assumption that~$M$ is the closed unit ball is not restrictive --- any simple $C^\infty$ Riemannian manifold is diffeomorphic to a closed ball in a Euclidean space.
In the absence of conjugate points the exponential map~$\exp_x$, related to an interior point $x \in\sisus(M)$, maps its maximal domain~$D_x$ diffeomorphically to~$M$ and~$D_x$ is itself diffeomorphic to the closed unit ball in~$\R^n$ (see~\cite{PSUGIPETD}).
We use global coordinates on a simple~$C^{1,1}$ Riemannian manifold and we have decided to include their existence in the definition.
\end{remark}

\begin{remark}
If one is to define a rough simple manifold as the limit of smooth simple manifolds, the simplicity needs to be quantified.
The example of a hemisphere as the limit of expanding polar caps shows that the smooth limit of smooth simple manifolds can be a smooth but non-simple manifold.\NTR{Recast this sentence for clarity.}
The limit procedure can introduce conjugate points and failure of strict convexity on the boundary.
An example of quantified simplicity can be found in~\cite{HILSSRSRMUIS}, but we do not take this limit route in our definition here.
\end{remark}

\subsection{Acknowledgements}

Both authors were supported by the Academy of Finland (JI by grants 332890 and 351665, AK by 336254).
We thank Matti Lassas for discussions and the anonymous referees for useful remarks\NTR{Added this. Many thanks!}.

\section{Proof of theorem~\ref{thm:c11-injectivity}}

This section contains the proof of theorem~\ref{thm:c11-injectivity}.
The proofs of the necessary lemmas are postponed to section~\ref{sec:lemmas-in-low-reg}.
More detailed definitions of function spaces and operators can be found from section~\ref{sec:preliminaries}.

We will freely identify a scalar function~$f$ and a one-form~$h$ on~$M$ with scalar functions on~$SM$ as described above.
Interpreting~$f$ and~$h$ as functions on $SM$ we can apply formula~\eqref{eq:uf} to both.

\begin{lemma}[Regularity of integral functions]
\label{lemma:regularity-of-integral-functions}
Let~$(M,g)$ be a simple~$C^{1,1}$ manifold.
\begin{enumerate}[(1)]
\setlength\itemsep{0.2em}
\item\label{reg-of-uf} Let~$f$ be a Lipschitz function on~$M$ that integrates to zero over all maximal geodesics of~$M$ and let~$u^f$ be the integral function of~$f$ defined by~\eqref{eq:uf}. Then~$u^f \in \Lip_0(SM)$,~$Xu^f \in H^1(SM)$ and~$\grad{v}u^f \in H^1_0(N,X)$.
\item\label{reg-of-uh} Let~$h$ be a Lipschitz~$1$-form on~$M$ that integrates to zero over all maximal geodesics of~$M$ and vanishes on the boundary~$\partial M$. If~$u^h$ is the integral function of~$h$ defined by~\eqref{eq:uf}, then~$u^h \in \Lip_0(SM)$,~$Xu^h \in H^1(SM)$ and~$\grad{v}u^h \in H^1_0(N,X)$.
\end{enumerate}
\end{lemma}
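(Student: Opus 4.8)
The plan is to prove both claims by the same mechanism, differing only in the regularity of the integrand along the geodesic flow. The key object is the integral function $u^f$, which satisfies the fundamental transport equation $Xu^f = -f$ on $SM$, with $u^f|_{\partial_+ SM \cup \partial_- SM}$ controlled by the vanishing condition on the X-ray transform. Indeed, since $f$ integrates to zero over all maximal geodesics and $\gamma_z(\tau(z)) \in \partial M$, one checks that $u^f$ vanishes on the outflow boundary, and the zero-transform hypothesis forces it to vanish on the inflow boundary as well; this is what upgrades $u^f$ to an element of the \emph{zero-boundary} space $\Lip_0(SM)$ rather than merely $\Lip(SM)$.

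\emph{First} I would establish the Lipschitz regularity of $u^f$. The integrand $f \circ \phi_t$ is a composition of a Lipschitz function with the geodesic flow; the subtlety in the $C^{1,1}$ setting is that the flow $\phi_t$ is only $C^{0,1}$ in its initial conditions rather than smooth, but this is exactly enough, because assumption~\ref{b3} gives that $\tau^2$ is Lipschitz on $SM$, and hence $\tau$ is locally Lipschitz away from the glancing set. Combining the Lipschitz dependence of $\phi_t$ on $z$, the Lipschitz bound on the domain of integration via $\tau$, and the Lipschitz regularity of $f$, one obtains $u^f \in \Lip(SM)$ by differentiating under the integral sign in the a.e.\ sense (Rademacher). \emph{Then} the boundary behaviour described above promotes this to $u^f \in \Lip_0(SM)$.

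\emph{Next} I would address the two higher-order claims, $Xu^f \in H^1(SM)$ and $\grad{v}u^f \in H^1(N,X)$. The first is immediate \emph{once} the transport equation is known: $Xu^f = -f$ and $f$, being Lipschitz on $M$ and pulled back to $SM$, lies in $H^1(SM)$ since Lipschitz functions are $W^{1,\infty} \subseteq H^1$ on the compact $SM$. For the vertical gradient, the natural route is to commute $\grad{v}$ past the transport equation. Formally one expects a commutator identity of the shape $X \grad{v} u^f = \grad{h} u^f - \grad{v}(Xu^f)$ (the standard structural commutator $[X,\grad{v}] = -\grad{h}$ on the sphere bundle), so that $X(\grad{v}u^f) = \grad{h}u^f + \grad{v}f$; the right-hand side involves only first horizontal/vertical derivatives of already-controlled quantities and therefore lies in $L^2(N)$, which is precisely the content of membership in the weak space $H^1(N,X)$ that only demands $L^2$ control of $X$ acting on the section.

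\emph{The hard part} will be making these commutator manipulations rigorous in the low-regularity setting, where neither $X\grad{v}u^f$ nor $\grad{v}Xu^f$ individually exists classically because $u^f$ has only one genuine derivative available. The commutator $[X,\grad{v}]$ must be interpreted in the weak/distributional sense flagged in the introduction, where $[A,B]$ is assigned meaning even though $AB$ and $BA$ separately fail to exist. I anticipate that the clean way around this is \emph{approximation}: regularize the metric $g$ by smooth metrics $g_\eps \to g$ in $C^{1,1}$, run the entire identity on the smooth structures where every operator is classical, and then pass to the limit using the uniform Lipschitz and $H^1$ bounds obtained above together with the stability of the curvature operator $R \in L^\infty(N)$ and of the flow under the $C^{1,1}$ convergence. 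The one-form case~(\ref{reg-of-uh}) then follows identically, with the only extra input being that $\tilde h$ is Lipschitz on $SM$ (it is fibrewise linear with Lipschitz coefficients) and that the boundary vanishing $h|_{\partial M}=0$ is what guarantees $u^h \in \Lip_0(SM)$, mirroring the role of the zero-transform condition in the scalar case.
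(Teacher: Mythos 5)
Your overall architecture matches the paper's: Lipschitz dependence of the flow on initial conditions, the transport equation $Xu^f=-f$ for the $H^1(SM)$ claim, and the commutator $[X,\grad{v}]=-\grad{h}$ made rigorous by mollifying the metric for the $H^1(N,X)$ claim. But there is a genuine gap in your Lipschitz argument for $u^f$. You write that $\tau^2\in\Lip(SM)$ implies $\tau$ is ``locally Lipschitz away from the glancing set'' and then combine this with the Lipschitz flow to differentiate under the integral. This does not close: near $\partial_0(SM)$ the function $\tau$ behaves like $\sqrt{d}$ and is genuinely not Lipschitz, so the term $\abs{\tau(z)-\tau(\hat z)}\cdot\sup\abs{f\circ\phi_t(z)}$ in the obvious splitting of $u^f(z)-u^f(\hat z)$ is \emph{not} bounded by $C\,d_{SM}(z,\hat z)$ for a generic Lipschitz $f$. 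The paper's fix is that $f$ itself must vanish on $\partial(SM)$: then $\abs{f(\phi_t(z))}\le\Lip(f)(\tau(z)-t)$ for $t\in[\tau(\hat z),\tau(z)]$, so the offending product is bounded by $\Lip(f)\,(\tau^2(z)-\tau^2(\hat z))\le\Lip(f)\Lip(\tau^2)\,d_{SM}(z,\hat z)$, which is exactly where assumption~\ref{b3} enters. Your proposal never uses the vanishing of the \emph{integrand} on the boundary in the scalar case, only the vanishing of $u^f$, and these are different facts.

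This points to a second missing ingredient: for a scalar $f$ the hypothesis is only that $f$ is Lipschitz on $M$, not that it vanishes on $\partial M$. The paper derives $f|_{\partial M}=0$ from the zero-transform hypothesis by averaging over short geodesics between nearby boundary points (lemmas~\ref{lma:unique-geods-imply-bnd-determ} and~\ref{lma:bnd-determ-for-scalars}); only after this step does the Lipschitz estimate above apply. For one-forms this short-geodesic argument only yields vanishing on the tangential directions $\partial_0(SM)$, which is precisely why $h|_{\partial M}=0$ must be \emph{assumed} in claim~\ref{reg-of-uh} (cf.\ remark~\ref{rmk:h=0}) --- its role is to make $u^h$ Lipschitz at all, not merely to place it in the zero-boundary subspace. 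The commutator part of your argument is essentially the paper's (modulo a sign: $X\grad{v}u^f=\grad{v}Xu^f-\grad{h}u^f$, and $\grad{v}Xu^f=-\grad{v}f=0$ for scalar $f$), and the smooth-approximation strategy you anticipate is indeed how lemma~\ref{lma:c11-commutator-formula} is proved.
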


\begin{lemma}[Pestov identity]
\label{lma:c11-pestov}
Let~$(M,g)$ be a simple~$C^{1,1}$ manifold and 
let~$u \in \Lip_0(SM)$
be such that~$Xu \in H^1(SM)$ and~$\grad{v}u \in H^1(N,X)$.
Then
\begin{equation}
\label{eq:pestov}
\norm{\grad{v}Xu}^2_{L^2(N)}
=
Q\left(\grad{v}u\right)
+
(n-1)
\norm{Xu}^2_{L^2(SM)}.
\end{equation}
\end{lemma}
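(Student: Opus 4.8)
The plan is to read the claimed equation as a closed identity between $L^2$ norms and pairings, each term of which is finite and depends continuously on $u$ through the graph norms of $H^1(SM)$ (for $Xu$) and $H^1(N,X)$ (for $\grad{v}u$). Since both $H^1(SM)$ and $H^1(N,X)$ are by definition completions of the corresponding $C^1_{(0)}$ spaces, it suffices to prove the identity for functions $u$ that are smooth on $SM$ and vanish on the boundary and then pass to the limit along a suitable approximating sequence. The curvature pairing survives the limit because $R \in L^\infty(N)$ and $\grad{v}u_j \to \grad{v}u$ in $L^2(N)$, while the other terms are $L^2$ limits of $\grad{v}Xu_j$, $X\grad{v}u_j$ and $Xu_j$.

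For smooth $u$ the computation follows the classical smooth derivation of the Pestov identity. First I would use the first-order commutator relation between $X$ and $\grad{v}$ to write $\grad{v}Xu = X\grad{v}u + \grad{h}u$ and expand $\norm{\grad{v}Xu}^2_{L^2(N)} = \norm{X\grad{v}u}^2_{L^2(N)} + 2\ip{X\grad{v}u}{\grad{h}u}_{L^2(N)} + \norm{\grad{h}u}^2_{L^2(N)}$. The remaining work is to show that the last two terms combine to $(n-1)\norm{Xu}^2_{L^2(SM)} - \iip{R\grad{v}u}{\grad{v}u}_{L^2(N)}$. This uses integration by parts for $X$ (skew-adjoint on $L^2(SM)$ and $L^2(N)$, with all boundary terms vanishing because $u \in \Lip_0(SM)$), the adjoint pairs $\grad{v}^{\ast} = -\dive{v}$ and $\grad{h}^{\ast} = -\dive{h}$, the second commutator $[X,\grad{h}] = R\grad{v}$ (the sole source of curvature), and a structural identity that contracts the vertical derivatives over the $(n-1)$-dimensional fibres $S_xM$ and produces the coefficient $n-1$. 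Careful bookkeeping of these ingredients yields the stated coefficients.

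The point needing genuine care is the meaning and validity of these manipulations when $g$ is only $C^{1,1}$. The horizontal operators $X$ and $\grad{h}$ carry Christoffel symbols, which are $C^{0,1}$, whereas $[X,\grad{h}] = R\grad{v}$ costs one more derivative of the metric and is therefore an almost-everywhere identity with $R$ merely in $L^\infty$; the vertical operators $\grad{v},\dive{v}$ differentiate along the fibres and retain their smoothness. Thus the commutator relations hold pointwise almost everywhere exactly at the $C^{1,1}$ threshold, which is where the regularity is spent. Crucially, the two second-order objects $\grad{v}Xu$ and $X\grad{v}u$ need not exist separately for a general $u$; the hypotheses $Xu \in H^1(SM)$ and $\grad{v}u \in H^1(N,X)$ are imposed precisely so that $\grad{v}(Xu)$ and $X(\grad{v}u)$ are well-defined elements of $L^2(N)$.

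The main obstacle is therefore the approximation step rather than the algebra. Because $X$ and $\grad{v}$ have rough variable coefficients they do not commute with mollification on $SM$, so one cannot smooth $u$ naively; and the two graph conditions are coupled through the single scalar $u$, since I need one sequence $u_j$ of smooth functions vanishing on $\partial(SM)$ for which simultaneously $u_j \to u$, $Xu_j \to Xu$ in $H^1(SM)$ and $\grad{v}u_j \to \grad{v}u$ in $H^1(N,X)$, so that the approximants of $\grad{v}u$ are themselves vertical gradients. Producing this joint approximation --- presumably via a dedicated density statement for this function class, possibly combined with approximating $g$ by smooth metrics carrying uniform $C^{1,1}$ bounds, so that the Christoffel symbols converge uniformly and the curvatures converge weakly-$\ast$ in $L^\infty$ --- is the delicate heart of the matter. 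Once it is in hand, checking that $X$ is skew-adjoint (divergence-free for the Liouville measure) with no surviving boundary contributions, and that the $L^\infty$ curvature term passes to the limit, completes the proof; the $L^\infty$-only curvature is exactly what confines the method to $C^{1,1}$.
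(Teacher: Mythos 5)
Your outline correctly identifies the classical skeleton (the commutator $[X,\grad{v}]=-\grad{h}$, the curvature commutator $[X,\grad{h}]=R\grad{v}$, integration by parts, the fibrewise contraction producing $n-1$) and, to your credit, you also locate exactly where the difficulty sits. But the step you defer --- producing a single sequence of smooth functions $u_j$ vanishing on $\partial(SM)$ with $u_j\to u$, $Xu_j\to Xu$ in $H^1(SM)$ and $\grad{v}u_j\to\grad{v}u$ in $H^1(N,X)$ simultaneously --- is not a routine density statement that can be presumed; it is the whole problem, and in the rough geometry it is not available in the form you need. The paper is explicit that $C^\infty(SM)$ does not even make sense for $SM\subseteq TM$ when $g$ is only $C^{1,1}$, and that the operators $X$, $\grad{h}$ do not preserve any smoothness class because their coefficients are merely Lipschitz; consequently the relevant function class is \emph{not} defined as a graph-norm closure of smooth functions for the rough metric, and there is no a priori reason the given $u$ lies in such a closure. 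Your proof therefore does not close as written.

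The paper fills this gap by inverting the roles of the two approximations: it never smooths $u$ in the rough geometry. Instead it first proves the identity for a \emph{smooth} metric $h$ at Sobolev regularity $\SSob$ (lemma~\ref{lma:u-sobolev-pestov}), where density of smooth functions holds by construction of $\SSob$; it then approximates $g$ by smooth metrics $\alf{g}$ (lemma~\ref{lma:approximate-metrics}), transfers the fixed $u$ to each bundle $\alf{S}M$ by the radial diffeomorphism (remark~\ref{rem:regularity-preserved} shows the hypotheses are preserved, so $\alf{u}\in\SSob(\alf{S}M)$ with zero boundary values), applies the smooth-metric identity there, and passes to the limit $\alpha\to\infty$ term by term after rewriting everything on a common reference bundle $S_hM$. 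Two points of your sketch would also need correction in that scheme: the curvatures $\alf{R}$ of the mollified metrics are only shown to converge in $L^1$, not weak-$\ast$ in $L^\infty$, and it is precisely the hypothesis $u\in\Lip(SM)$ (giving $\grad{v}u\in L^\infty$) that rescues the curvature pairing in the limit; and the boundary terms vanish because $u\in\Lip_0$, not merely because $X$ is formally skew-adjoint. So the algebraic core of your argument is the right one, but the approximation architecture you propose would have to be replaced by the metric-approximation route to obtain a complete proof.
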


Lemma~\ref{lemma:regularity-of-integral-functions} provides enough regularity to apply the Pestov identity~\eqref{eq:pestov} to the integral functions~$u^f$ and~$u^h$ because we will see in remark~\ref{rem:regularity-preserved} that~$\Lip(SM) \subseteq H^1(SM)$ even if the metric tensor in only in~$C^{1,1}$.
The following lemma shows that certain norms of the integral function~$u^h$ of a~$1$-form cancel in the identity.

\begin{lemma}
\label{lma:oneform-in-pestov-cancels}
Let~$(M,g)$ be a simple~$C^{1,1}$ manifold and let~$h$ be a Lipschitz~$1$-form on~$M$.
Then
\begin{equation}
\norm{\grad{v}h}^2_{L^2(N)}
=
(n-1)
\norm{h}^2_{L^2(SM)}.
\end{equation}
\end{lemma}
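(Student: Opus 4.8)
The plan is to reduce the identity to a pointwise computation on the fibers of $SM$, exploiting that the vertical gradient $\grad{v}$ differentiates only along the fibers and that $h$, viewed through the identification $\tilde h(x,v)=h_x(v)$, restricts on each fiber $S_xM$ to a \emph{linear} function of $v$. Since $h$ is Lipschitz, $\tilde h$ is bounded and hence lies in $L^2(SM)$, while its fiberwise smoothness guarantees that $\grad{v}\tilde h$ is well defined and lies in $L^2(N)$. Crucially, the low $C^{1,1}$ regularity of $g$ enters only through the dependence on the base point $x$ and never obstructs the vertical differentiation.

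First I would fix $x \in M$ and work on the single fiber $S_xM$, which is the unit sphere of the inner product space $(T_xM, g_x)$ and is therefore isometric to the round $S^{n-1}$. Writing $H_x \in T_xM$ for the $g_x$-dual vector of $h_x$, so that $\tilde h(x,v) = g_x(H_x,v)$, the restriction of $\tilde h(x,\cdot)$ to $S_xM$ is precisely the degree-one spherical harmonic $v \mapsto g_x(H_x,v)$. Its vertical gradient is the part of $H_x$ tangential to the fiber, namely $\grad{v}\tilde h(x,v) = H_x - g_x(H_x,v)\,v$, which gives the pointwise identity
\begin{equation*}
\abs{\grad{v}\tilde h(x,v)}^2 = \abs{H_x}^2 - g_x(H_x,v)^2 = \abs{h_x}^2 - \tilde h(x,v)^2 .
\end{equation*}

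Next I would integrate this over the fiber against the spherical measure~$\d\omega_x$. The only nontrivial input is the standard spherical average $\int_{S_xM} g_x(H_x,v)^2 \,\d\omega_x(v) = \tfrac{1}{n}\abs{H_x}^2 \Vol(S^{n-1})$, which follows from the symmetry of the round sphere. Combined with the pointwise identity this yields the fiberwise relation $\int_{S_xM} \abs{\grad{v}\tilde h}^2\,\d\omega_x = (n-1)\int_{S_xM}\tilde h^2\,\d\omega_x$. (Equivalently, one may invoke that a degree-one spherical harmonic is an eigenfunction of the spherical Laplacian with eigenvalue $n-1$ and integrate by parts on the fiber.) Finally, integrating over $M$ against the Riemannian volume — the Liouville measure on $SM$ factoring as the product of the volume on $M$ and the spherical measure on the fibers — promotes the fiberwise identity to the claimed $\norm{\grad{v}h}^2_{L^2(N)} = (n-1)\norm{h}^2_{L^2(SM)}$.

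I expect the only point requiring genuine care to be the verification that this entirely fiberwise argument is legitimate in the $C^{1,1}$ setting: one must confirm that $\grad{v}$ applied to $\tilde h$ agrees with the fiberwise spherical gradient computed above, and that the measure on $SM$ genuinely decomposes as the stated product. Both hold because $\grad{v}$ involves no $x$-derivatives, so nothing but the smooth round-sphere geometry of each individual fiber is used. The remainder is the routine spherical computation above, which is completely insensitive to the regularity of $g$ in the base variable.
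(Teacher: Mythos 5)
Your proposal is correct and follows essentially the same route as the paper: both reduce the identity to the fiberwise computation for the degree-one spherical harmonic $v\mapsto g_x(H_x,v)$ on the round fiber $S_xM$ and then integrate over the base using the product structure $\d\Sigma_g=\d S_x\wedge\d V_g$. The only difference is that you carry out the spherical average explicitly, whereas the paper cites it from an external lemma; your closing remark that $\grad{v}$ involves no $x$-derivatives is exactly the point the paper makes when noting the low regularity is harmless here.
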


We are ready to prove theorem~\ref{thm:c11-injectivity}.

\begin{proof}[Proof of theorem~\ref{thm:c11-injectivity}]
\ref{injectivity-claim-1}
The integral function~$u^f$ of $f\in\Lip(M)$ satisfies
$Xu^f \in H^1(SM)$ and $\grad{v}u^f \in H^1(N,X)$ by lemma~\ref{lemma:regularity-of-integral-functions}.
Thus we can apply the Pestov identity of lemma~\ref{lma:c11-pestov} to~$u^f$.
By the fundamental theorem of calculus~$Xu^f = -f$ and thus~$\grad{v}Xu^f = 0$, since~$f$ does not depend on the direction~$v \in S_xM$.
By~$C^{1,1}$ simplicity (definition~\ref{def:simple-b}) of~$(M,g)$, the quadratic form~$Q$ is non-negative. Thus the Pestov identity reduces to
\begin{equation}
0
\ge
(n-1)
\norm{Xu^f}^2_{L^2(SM)}.
\end{equation}
Hence~$f = -Xu^f = 0$ in~$L^2(SM)$ as claimed.

\ref{injectivity-claim-2}
If~$h = \d p$ for some scalar function~$p \in C^{1,1}(M)$ with~$p|_{\partial M} = 0$, then by the fundamental theorem of calculus~$h$ integrates to zero over all maximal geodesics of~$M$.

Let~$h$ be a Lipschitz~$1$-form on~$M$ that integrates to zero over all maximal geodesic of~$M$ and vanishes on the boundary~$\partial M$.
We will show that~$h = \d p$ for some function~$p \in C^{1,1}(M)$ vanishing on~$\partial M$.
Lemma~\ref{lemma:regularity-of-integral-functions} allows us to apply the Pestov identity to the integral function~$u^h$ of~$h$.
Due to lemma~\ref{lma:oneform-in-pestov-cancels}, the identity reduces to
\begin{equation}
Q\left(\grad{v}u^h\right)
=
0
.
\end{equation}
Since the manifold is simple~$C^{1,1}$, this can only happen if~$\grad{v}u^h = 0$.
The function~$u^h$ is Lipschitz and independent of the direction~$v \in S_xM$ on each fiber and therefore there is a Lipschitz scalar function~$p$ on~$M$ so that~$u^h=-\pi^*p$ on~$SM$.
Additionally,~$p|_{\partial M} = u^h|_{\partial(SM)} = 0$, since~$h$ integrates to zero over all maximal geodesics of~$M$.
Since $Xu^h = -h$, we have shown that~$\d p = h$ in the weak sense.
Because~$h$ is Lipschitz-continuous by assumption, we have that~$\der p$ is Lipschitz and thus~$p\in C^{1,1}$ and the proof is complete.
\end{proof}

\section{Proof of theorem~\ref{thm:c11-simplicity}}
\label{sec:proof-of-second-thm}

In this section we prove that in the smooth setting definition~\ref{def:simple-b} of~$C^{1,1}$ simplicity is equivalent to definition~\ref{def:simple-a} of~$C^\infty$ simplicity.
Proofs of lemmas~\ref{lma:posit-q-implies-no-conj} and~\ref{lma:conv-bnd-equiv-tau2-lip} are given in section~\ref{sec:lemmas-in-smooth-geometry}.
Theorem~\ref{thm:c11-simplicity} readily follows from lemmas~\ref{lma:posit-q-implies-no-conj} and~\ref{lma:conv-bnd-equiv-tau2-lip}.

\begin{lemma}
\label{lma:posit-q-implies-no-conj}
Let~$(M,g)$ be a simple~$C^{1,1}$ manifold with~$C^{\infty}$-smooth Riemannian metric~$g$. Then there are no conjugate points in~$M$, not even on the boundary.
\end{lemma}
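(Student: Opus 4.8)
The plan is to prove the contrapositive: I assume that $(M,g)$ possesses a conjugate pair and manufacture a nonzero section $W \in H^1(N)$ whose Rayleigh quotient $Q(W)/\norm{W}^2_{L^2(N)}$ is arbitrarily small, contradicting the coercivity in~\ref{b1}. The mechanism linking $Q$ to conjugacy is Santaló's formula, available here because the metric is smooth, which disintegrates the integrals defining $Q$ along the geodesic flow: writing $W_z(t) = W(\phi_t(z))$ for a section vanishing on the influx boundary,
\[
Q(W) = \int_{\dooin SM} \int_0^{\tau(z)} \left( \abs{D_t W_z}^2 - \ip{R(W_z,\dot\gamma_z)\dot\gamma_z}{W_z} \right) \d t \, \d\mu(z),
\]
where $D_t$ is covariant differentiation along $\gamma_z$. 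The inner integral is exactly the index form of $\gamma_z$ applied to the normal field $W_z$; I denote it $I_z(W_z)$, so that $Q$ aggregates the index forms $I_z$ of all maximal geodesics.

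Next I build the test field. If $\gamma = \gamma_{z_0}$ carries conjugate points $\gamma(t_0)$ and $\gamma(t_1)$ with $0 \le t_0 < t_1 \le \tau(z_0)$, let $J$ be a nontrivial Jacobi field with $J(t_0) = J(t_1) = 0$. Extending $J$ by zero outside $[t_0,t_1]$ produces a Lipschitz normal field $Y$ along $\gamma$ that vanishes at both endpoints of the maximal geodesic and satisfies $I_{z_0}(Y) = \int_0^{\tau(z_0)} (\abs{D_tY}^2 - \ip{R(Y,\dot\gamma)\dot\gamma}{Y})\,\d t = 0$, by a single integration by parts using the Jacobi equation. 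Crucially this construction is indifferent to whether $t_0,t_1$ are interior or land on $\partial M$, which is what will let us reach conjugate points on the boundary.

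I then spread $Y$ into a thin geodesic tube. For $z$ in a small neighborhood $U \subseteq \dooin SM$ of $z_0$ I transport the fixed coefficient functions of $Y$ in a parallel frame along $\gamma_z$, rescaling the time interval affinely onto $[0,\tau(z)]$ so that the resulting normal field $Y_z$ still vanishes at the endpoints of $\gamma_z$ and reduces to $Y$ at $z = z_0$. Setting $W(\phi_t(z)) = Y_z(t)$ inside the tube and $W = 0$ outside gives a section $W \in H^1(N)$; here~\ref{b2} guarantees the tube is embedded for small $U$ and~\ref{b3} gives continuity of $\tau$. Smoothness of $g$ makes the integrand depend continuously on $z$, so $I_z(Y_z) \to I_{z_0}(Y) = 0$ as $U$ shrinks, while $\norm{Y_z}_{L^2}$ stays bounded below. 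By the displayed formula $Q(W) = \int_U I_z(Y_z)\,\d\mu$ is then $o(\mu(U))$ whereas $\norm{W}^2_{L^2(N)} \gtrsim \mu(U)$, forcing $Q(W)/\norm{W}^2_{L^2(N)} \to 0$ and contradicting~\ref{b1}.

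The main obstacle is the case where the conjugate points sit on $\partial M$, i.e.\ where $\gamma(\tau(z_0))$ is the \emph{first} conjugate point along $\gamma$. There the index form of the central geodesic is only positive semidefinite, so approaches that demand a strictly negative test direction before localizing break down. The point of organizing the argument around the vanishing of the index form together with a continuity-to-zero estimate for the Rayleigh quotient is precisely that it needs no strict negativity, and hence treats boundary conjugate points on the same footing as interior ones. What remains is technical care: verifying that the tube is embedded, that $I_z(Y_z)$ is genuinely continuous up to $z_0$ using only~\ref{b2} and~\ref{b3}, and that the extended-by-zero field is admissible in $H^1(N)$.
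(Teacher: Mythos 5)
Your proposal is essentially the paper's own argument run in the contrapositive direction: both disintegrate $Q$ via Santal\'o's formula into index forms along maximal geodesics, localize around a single geodesic $\gamma_{z_0}$, and invoke the coercivity~\ref{b1}. The paper shows directly that $I_{\gamma_0}(V)\ge\varepsilon\int_0^{\tau(z_0)}\abs{V}^2\,\d t>0$ for every nonzero normal field $V$ vanishing at the endpoints of a maximal geodesic, which excludes interior and boundary conjugate points in one stroke, while you exhibit the broken Jacobi field with vanishing index form and drive the Rayleigh quotient to zero; these are two phrasings of the same computation.

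The one step that fails as literally written is the claim that setting $W=0$ outside the tube yields $W\in H^1(N)$: the sharp cutoff in the $z$-variable is transverse to the flow, so while $XW$ is harmless, the divergences $\dive{v}W$ and $\dive{h}W$ become surface distributions on the lateral boundary of the tube, and $W$ is not admissible in~\ref{b1}. The paper sidesteps this by fixing one smooth extension $\tilde V$ of the normal field to a neighbourhood of the lifted geodesic and weighting it by smooth functions $a_k$ on $\dooin(SM)$ with $\int a_k^2\,\d\mu=1$ and $a_k^2\to\delta_{z_0}$ weakly, i.e.\ $W_k(\phi_t(z))=a_k(z)\tilde V(\phi_t(z))$; replacing your indicator of $U$ by such a smooth weight repairs the construction without altering the rest of your argument.
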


\begin{lemma}
\label{lma:conv-bnd-equiv-tau2-lip}
Let~$M$ be a compact Riemannian manifold with smooth boundary and a~$C^\infty$-smooth Riemannian metric~$g$.
Suppose that~$(M,g)$ is non-trapping. Then~$\partial M$ is strictly convex in the sense of the second fundamental form if and only if~$\tau^2 \in \Lip(SM)$.
\end{lemma}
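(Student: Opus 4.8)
The plan is to localize all the regularity analysis to the \emph{glancing set} $G = \{(x,v)\in SM : x\in\partial M,\ \ip{v}{\nu(x)} = 0\}$, where $\nu$ is the outward unit normal, since this is the only place where $\tau$ can lose regularity. Fix a boundary defining function $\rho$ with $\rho < 0$ in the interior, $\rho = 0$ on $\partial M$ and $\nabla\rho$ non-vanishing on $\partial M$; with this sign convention strict convexity of $\partial M$ is equivalent to $\Hess\rho(v,v) > 0$ for every $x\in\partial M$ and every nonzero $v\in T_x\partial M$ (the second fundamental form is $\Hess\rho/\abs{\nabla\rho}$ restricted to tangent vectors). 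For $z = (x,v)\in SM$ write $\mu_z(t) = \rho(\gamma_z(t))$, so that $\mu_z(0) = \rho(x)$, $\mu_z'(0) = \ip{\nabla\rho}{v}$, and, since $\gamma_z$ is a geodesic, $\mu_z''(t) = \Hess\rho(\dot\gamma_z(t),\dot\gamma_z(t))$; then $\tau(z)$ is the first positive zero of $\mu_z$.

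For the forward direction, assume strict convexity. Under strict convexity every geodesic meets $\partial M$ transversally except at points of $G$ itself, so away from $G$ one has $\mu_z'(\tau(z)) \ne 0$ and the implicit function theorem makes $\tau$, hence $\tau^2$, smooth there; by compactness of $SM$ it remains to prove $\tau^2$ is Lipschitz near $G$. There, strict convexity together with continuity and compactness yields a uniform bound $\mu_z''(t) \ge c > 0$ for all near-glancing $z$ while $\gamma_z(t)$ stays in a fixed collar of $\partial M$, i.e.\ each $\mu_z$ is uniformly convex. Applying the implicit function theorem to $\mu_z'(t) = 0$ (solvable because $\mu_z'' \ne 0$) produces a smooth time of closest approach $t_*(z)$ and a smooth \emph{depth} $a(z) = \mu_z(t_*(z)) \le 0$, both vanishing on $G$; this is exactly the statement that strict convexity turns every tangency into a nondegenerate fold. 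Expanding $\mu_z$ about $t_*(z)$, the forward exit time satisfies $\tau(z) = t_*(z) + \sqrt{-2a(z)/\mu_z''(t_*(z))}\,(1+o(1))$, and squaring expresses $\tau(z)^2$ as the smooth terms $t_*^2$ and $-2a/\mu_z''$ plus a cross term proportional to $t_*\sqrt{-a}$. The smooth terms are Lipschitz, and since $t_*$ is smooth and vanishes on $G$ while $\sqrt{-a}$ is bounded and Hölder-$\tfrac12$, the cross term is $O(\mathrm{dist}(\cdot,G)^{3/2})$ with bounded difference quotients, so $\tau^2$ is Lipschitz. The main obstacle is making this last step uniform: controlling $t_*\sqrt{-a}$ by a single Lipschitz constant across the whole glancing neighborhood, which is where the nondegeneracy $\mu_z''\ge c$ enters quantitatively.

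For the converse I argue by contrapositive. Suppose $\partial M$ is not strictly convex, so there exist $x_0\in\partial M$ and a unit $v_0\in T_{x_0}\partial M$ with $\Hess\rho(v_0,v_0)\le 0$. If $\Hess\rho(v_0,v_0) < 0$, then $\mu_{z_0}(t) = \tfrac12\Hess\rho(v_0,v_0)t^2 + o(t^2) < 0$ for small $t\ne 0$, so the geodesic tangent at $z_0 = (x_0,v_0)$ enters the interior and, by the non-trapping hypothesis, exits at some time $\tau(z_0) > 0$; but tilting $v_0$ outward gives directions that exit immediately, so $\tau$ and hence $\tau^2$ jump from $0$ to a positive value and are not even continuous at $z_0$. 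If instead $\Hess\rho(v_0,v_0) = 0$, the contact with the boundary is of order at least three, and a one-parameter family of interior initial points approaching $z_0$ has exit times comparable to $\mathrm{dist}^{1/4}$ (as in the model boundary $\{y \ge x^4\}$), so $\tau^2 \sim \mathrm{dist}^{1/2}$ fails to be Lipschitz. In either case $\tau^2\notin\Lip(SM)$, which proves the contrapositive; the delicate point is the degenerate contact case $\Hess\rho(v_0,v_0) = 0$, where one must produce the higher-order vanishing explicitly rather than reading off a discontinuity.
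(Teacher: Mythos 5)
Your forward direction takes a genuinely different route from the paper's: you resolve the tangency at the glancing set by hand via the quadratic expansion $\tau = t_* + \sqrt{-2a/\mu_z''}\,(1+o(1))$, whereas the paper differentiates $\tau$ with the implicit function theorem, obtaining $\tfrac{\d}{\d s}\tau = -\ip{\tfrac{\d}{\d s}\gamma_{z_s}}{\nu}/\ip{\dot\gamma_{z_s}}{\nu}$ at the exit point, and then cancels the vanishing denominator against the extra factor of $\tau$ in $\d(\tau^2)=2\tau\,\d\tau$ by means of the estimate $\tau(z)\lesssim\abs{\ip{\dot\gamma_z(\tau(z))}{\nu}}$ (exit time controlled by exit angle, read off from the boundary asymptotics of the travel time). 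Your route could probably be completed, but as written it has a real gap that you only gesture at: Lipschitz continuity of the cross term does not follow from ``$t_*$ smooth and $\sqrt{-a}$ H\"older-$\tfrac12$''; you need the quantitative inequality $\abs{t_*}\le C\sqrt{-a}$, which does follow from uniform convexity since $-a(z)\ge-\rho(x)+\tfrac{c}{2}t_*(z)^2\ge\tfrac{c}{2}t_*(z)^2$, but is nowhere stated. More seriously, the $(1+o(1))$ remainders must themselves be shown to contribute Lipschitz terms after squaring; a pointwise $o(1)$ as $z\to G$ gives no control on difference quotients, and this is precisely where the work lies.

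The converse has a genuine gap in the degenerate case $\Hess\rho(v_0,v_0)=0$. Your claim that nearby interior exit times behave like $\mathrm{dist}^{1/4}$ ``as in the model $\{y\ge x^4\}$'' presupposes contact of order exactly four with a definite sign. The contact could be of order three (in which case one of $\pm v_0$ yields a discontinuity as in your first case, but the other does not), of any higher finite order, or of infinite order, and geodesics are not straight lines, so the flat model does not transfer without an argument. The paper avoids the case analysis entirely: it starts a geodesic at depth $h$ along the inward normal with the parallel-transported direction $v_h$, uses $S_x(v,v)\le0$ and smoothness to get the differential inequality $\ddot z_h(t)=-S_h(t)\ge-C(h+\abs{t})$ for the distance $z_h$ to the boundary, integrates it to conclude $z_h(t)>0$ for $\abs{t}\le Ah^{1/3}$, and hence $\tau^2(x_h,v_h)\ge A^2h^{2/3}\gg h$, contradicting Lipschitz continuity uniformly over all degenerate sub-cases (the comparison point being an outward-tilted direction at $x$, where $\tau$ vanishes). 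Without a uniform lower bound of this kind, your contrapositive is incomplete exactly where the statement is delicate.
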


\begin{proof}[Proof of theorem~\ref{thm:c11-simplicity}]
By remark~\ref{rem:global-coords} each simple~$C^{\infty}$ Riemannian manifold is diffeomorphic to the closed unit ball~$\overline{B}$ in~$\R^n$.
Thus we may assume that~$M = \overline{B}$ and let~$g$ be a~$C^\infty$-smooth Riemannian metric on~$M$.
It suffices to show that~$(M,g)$ satisfies conditions~\ref{a1}--\ref{a2} in definition~\ref{def:simple-a} if and only if it satisfies conditions \ref{b1}--\ref{b3} in definition~\ref{def:simple-b}.
We have illustrated these implications in figure~\ref{fig:proof-of-thm-2}.

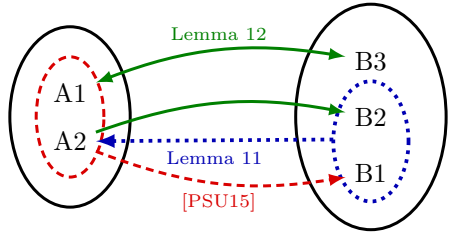
\begin{figure}[htb]
\centering
\begin{tikzpicture}
\node[] at (-4,0.325) (a1) {\ref{a1}};
\node[] at (-4,-0.325) (a2) {\ref{a2}};
\node[] at (0,-0.75) (b1) {\ref{b1}};
\node[] at (0,0) (b2) {\ref{b2}};
\node[] at (0,0.75) (b3) {\ref{b3}};
%

\draw[black,line width=0.4mm] (-4,0) ellipse (0.8cm and 1.2cm);
\draw[black,line width=0.4mm] (0,0) ellipse (1cm and 1.5cm);

\draw[blue!70!black,line width=0.5mm, dotted] (0,-0.325) ellipse (0.5cm and 0.8cm);
\draw[red!85!black,line width=0.4mm,densely dashed] (-4,0) ellipse (0.45cm and 0.8cm);

\draw[-{latex[round]},line width=0.4mm,red!85!black,densely dashed]
(a2) edge[bend right=15]node[midway,below]{\scriptsize \cite{PSUIDBTTT}} (b1);

\path[-{latex[round]},line width=0.5mm,blue!70!black, dotted]
(b2) + (-0.5,-0.3) edge node[midway,below]{\scriptsize Lemma \ref{lma:posit-q-implies-no-conj}} (a2);

\draw[-{latex[round]},line width=0.4mm,green!50!black] (a2) edge[bend left=15] (b2);
%

\draw[{latex[round]}-{latex[round]},line width=0.4mm,green!50!black]
(a1) edge[bend left=15] node[midway,above]{\scriptsize Lemma~\ref{lma:conv-bnd-equiv-tau2-lip}} (b3);
\end{tikzpicture}
\captionsetup{width=0.9\textwidth}
\caption{Illustration of the proof of theorem~\ref{thm:c11-simplicity}.
The arrows represent implications except the one double headed arrow, which represents equivalence.
The \textcolor{green!50!black}{green (solid)} arrows connect one condition to another.
The \textcolor{red!85!black}{red (dashed)} and the \textcolor{blue!70!black}{blue (dotted)} arrows indicate that one condition follows from the two conditions circled with the same color (style).}
\label{fig:proof-of-thm-2}
\end{figure}

By lemma~\ref{lma:conv-bnd-equiv-tau2-lip} conditions~\ref{a1} and~\ref{b3} are equivalent.
By lemma~\ref{lma:posit-q-implies-no-conj} the condition~\ref{b1} implies that there are no conjugate points on~$M$.
Thus we can promote the continuous dependence in~\ref{b2} to smooth dependence~\ref{a2}.
Therefore simple~$C^{1,1}$ manifolds satisfy both conditions~\ref{a1} and~\ref{a2} of~$C^{\infty}$ simplicity.
Conversely, simple~$C^\infty$ manifolds satisfy~\ref{b1} (see~\cite[Lemma 11.2]{PSUIDBTTT}) and clearly~\ref{b2} is strictly weaker than\NTR{Fixed typo.}~\ref{a2}.
\end{proof}

\section{Bundles, function spaces and operators}
\label{sec:preliminaries}

This section complements the preliminaries in subsection~\ref{subsec:preliminaries}.
The main focus is on a detailed description of structures, functions spaces and operators build on a compact Riemannian manifold~$(M,g)$ with a~$C^{1,1}$ regular Riemannian metric.

\subsection{Function spaces on smooth manifolds}

Let~$M$ be a compact smooth manifold with a smooth boundary. The space of smooth functions on~$M$ is denoted~$C^{\infty}(M)$ and the space of differentiable functions with Lipschitz derivatives is denoted~$C^{1,1}(M)$. We let~$C^{1,1}(T^2M)$ denote the space of~$2$-tensor fields on~$M$, whose component functions are in~$C^{1,1}(M)$.

If~$h$ is a smooth Riemannian metric on~$M$, then~$L^2_h(M)$ and~$L^{\infty}_h(M)$ will respectively denote spaces of square integrable and essentially bounded functions on~$M$, where the Riemannian volume form of~$h$ is used as the measure.
Similarly,~$W^{1,p}_h(M)$ and~$W^{2,p}_h(M)$ will respectively denote Sobolev spaces with~$p$-integrable covariant derivatives of the first order and of the second order.
Norms of the covariant derivatives on the tangent spaces are always defined by the metric~$h$.

\subsection{Structures in low regularity}

Let~$(M,g)$ be a compact Riemannian manifold with a smooth boundary.
We assume that~$g \in C^{1,1}(T^2M)$.
The unit sphere bundle $SM=\{v\in TM:\abs{v}=1\}$ is a submanifold of~$TM$, but not in general a smooth one.
Despite the non-smoothness of~$SM\subseteq TM$ as a submanifold, it can be equipped with an induced smooth structure:~$SM$ is naturally homeomorphic to the quotient space~$(TM\setminus 0)/\sim$, where~$v \sim \lambda v$ for all~$\lambda > 0$ and~$v \in T_xM$.
Metric structures like the Sasaki metric are still non-smooth, so this smooth structure is of little use.
We will only see~$SM$ as a submanifold of~$TM$.

For $k \in \{0,1\}$ a function~$u \colon SM \to \R$ is said to be in~$C^{k}(SM)$ if~$u$ is~$k$ times continuously differentiable --- for $k\geq2$ this concept is undefined in our setting.
As a~$C^1$ submanifold of~$TM$ the sphere bundle has enough regularity to define both~$C(SM)$ and~$C^1(SM)$.
The subset~$C^k_0(SM)$ of~$C^k(SM)$ consists of functions vanishing on
\begin{equation}
\partial(SM) = \{\, (x,v) \in SM \,:\, x \in \partial M \,\}.
\end{equation}
The set of Lipschitz functions on~$SM$ is denoted by~$\Lip(SM)$.
We denote the inward unit normal vector field to the
boundary~$\partial M$ by~$\nu$. The boundary~$\partial(SM)$ is divided into parts pointing inwards and outwards, respectively denoted by
\begin{equation}
\dooin(SM)
\coloneqq
\{\, (x,v) \in \partial(SM) \,:\, \ip{v}{\nu(x)} \ge 0 \,\}
\end{equation}
and
\begin{equation}
\dooout(SM)
\coloneqq
\{\, (x,v) \in \partial(SM) \,:\, \ip{v}{\nu(x)} \le 0 \,\}.
\end{equation}
Their intersection consists of tangential directions
\begin{equation}
\partial_0(SM)
\coloneqq
\dooin(SM) \cap \dooout(SM).
\end{equation}

Many differential operators considered in this article operate on sections of the bundle~$N$. To describe~$C^k$ spaces of sections of~$N$, recall that~$N$ is the subbundle of~$\pi^{\ast}TM$ with fibers $N_{(x,v)} = \{v\}^{\perp} \subseteq T_xM$.
A section~$V$ of the bundle~$N$ is a section of the bundle~$\pi^{\ast}TM$ with the property that $\ip{V(x,v)}{v}_{g(x)} = 0$ for all~$(x,v) \in SM$.
We say that such a section is in~$C^k(N)$ for~$k \in \{0,1\}$ if the corresponding section of~$\pi^{\ast}TM$ is~$k$ times continuously differentiable.
Differentiability of a section~$W$ of~$\pi^{\ast}TM$ is well defined since~$W$ is a certain function between two differentiable manifolds~$SM$ and~$TM$.
The subspace~$C^k_0(N) \subseteq C^k(N)$ consists of sections~$V$ of~$N$ that vanish on~$\partial(SM)$.

Let~$(x,v)$ be a local coordinate system on~$TM$ and let~$\partial_{x^j}$ and~$\partial_{v^k}$ be corresponding coordinate vector fields.
We introduce new vector fields~$\delta_{x^j} = \partial_{x^j} - \Gamma^l_{\ jk}v^k\partial_{v^l}$ on~$TM$, where~$\Gamma^l_{\ jk}$ are the Christoffel symbols of the metric~$g$.
As the metric tensor in our results is of regularity~$C^{1,1}$, it follows that the Christoffel symbols and thus the vector fields~$\delta_{x^j}$ are only Lipschitz.\NTR{Added this sentence to point out the regularity.}


\subsection{Differential operators}
\label{subsec:differentialoperators}

Next we define differential operators on~$SM$ and~$N$.
The basic coordinate derivatives of a function~$u \in C^1(SM)$ are defined by
\begin{equation}
\delta_ju \coloneqq \delta_{x^j}(u \circ r)|_{SM}
\quad
\text{and}
\quad
\partial_ku \coloneqq \partial_{v^k}(u \circ r)|_{SM},
\end{equation}
where~$r \colon TM \setminus 0 \to SM$ is the radial function~$r(x,v) = (x,v\abs{v}^{-1}_{g(x)})$.
We denote~$\delta^j \coloneqq g^{jk}\delta_k$ and~$\partial^j \coloneqq g^{jk}\partial_k$.
We use the basic derivatives to define operators in local coordinates.

The geodesic vector field~$X$ is a differential operator that acts both on functions on~$SM$ and on sections of the bundle~$N$. The actions on a scalar function~$u$ and on a section~$V$ are defined by
\begin{equation}
\label{eq:geod-vector-field}
Xu = v^j\delta_ju
\quad
\text{and}
\quad
XV = (XV^j)\partial_{x^j} + \Gamma^l_{\ jk}v^jV^k\partial_{x^l}.
\end{equation}
Vertical and horizontal gradients are differential operators defined respectively by
\begin{equation}
\grad{v}u = (\partial^ju)\partial_{x^j}
\quad
\text{and}
\quad
\grad{h}u = (\delta^ju - (Xu)v^j)\partial_{x^j}.
\end{equation}
Coordinate formulas indicate that~$\grad{v}$ is the gradient in~$v$ and~$\grad{h}$ is the gradient in~$x$ with the direction of~$v$ being projected out. The adjoint operators of~$\grad{v}$ and~$\grad{h}$ are the vertical and the horizontal divergences
\begin{align}
\dive{v} V = \partial_jV^j
\quad
\text{and}
\quad
\dive{h} V = (\delta_j + \Gamma^i_{\ ji})V^j.
\end{align}

The Riemannian curvature tensor~$R$ of the metric~$g$ has an action on sections of~$N$ defined by
\begin{equation}
RV
=
R^l_{\ ijk}V^iv^jv^k\partial_{x^l}.
\end{equation}

\begin{figure}[htb]
\centering
\begin{tikzpicture}[every text node part/.style={align=center}]
\node[text width=2.5cm] at (-2.1,0) (a1) {Functions on~$SM$};
\node[text width=2cm] at (2.1,0) (a2) {Sections of~$N$};
\node[circle,draw=none,minimum size=1cm] at (2.1,-0.45) (b1) {};
\node[circle,draw=none,minimum size=1cm] at (2.1,0.45) (b2) {};
\node[circle,draw=none,minimum size=1cm] at (-2.1,0.45) (b3) {};

\draw[black,line width=0.3mm] (-2.1,0) ellipse (1cm and 0.7cm);
\draw[black,line width=0.3mm] (2.1,0) ellipse (1cm and 0.7cm);

\draw[-{latex[round]},line width=0.3mm]
(a1) edge[bend left=20] node[midway,above]{$\grad{v}, \grad{h}$} (a2);
\draw[-{latex[round]},line width=0.3mm]
(a2) edge[bend left=20] node[midway,below]{$\dive{v}$, $\dive{h}$} (a1);
%

\draw[-{latex[round]},line width=0.3mm,]
(b2) edge[loop above,style={min distance=1.5cm,in=55,out=125,looseness=8}] node[midway,above]{$X$} (b2);
\draw[-{latex[round]},line width=0.3mm]
(b3) edge[loop above,style={min distance=1.5cm,in=55,out=125,looseness=8}] node[midway,above]{$X$} (b3);
\draw[-{latex[round]},line width=0.3mm,]
(b1) edge[loop above,style={min distance=1.5cm,in=235,out=305,looseness=8}] node[midway,below]{$R$} (b1);
\end{tikzpicture}
\captionsetup{width=0.9\textwidth}
\caption{Interplay of the operators defined in subsection~\ref{subsec:differentialoperators}. The gradients map functions on~$SM$ to sections of~$N$. The divergences map sections of~$N$ back to function on~$SM$. The geodesic vector field maps functions to functions and sections to sections. The curvature operator acts only on sections and produces sections.}
\label{fig:figure2}
\end{figure}
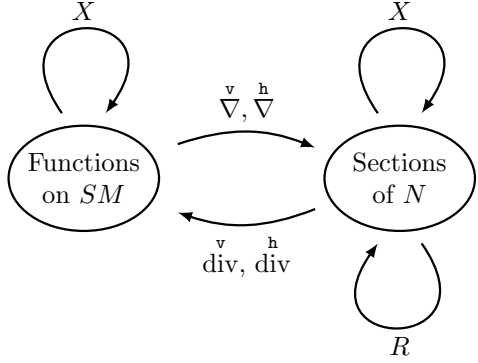

\subsection{Integration and Sobolev spaces}

A simple~$C^{1,1}$ manifold~$M$ is orientable, so the Riemannian volume form on it can be defined in local coordinates as
\begin{equation}
\d V_g(x) \coloneqq \abs{\det(g(x))}^{1/2}\d x^1 \wedge \dots \wedge \d x^n.
\end{equation}
For any~$x \in M$ the pair~$(S_xM,g(x))$ is a Riemannian manifold.
Let~$\d S_x$ be the associated Riemannian volume form on~$S_xM$.
We use~$\d V_g$ and~$\d S_x$ to define the volume form~$\d \Sigma_g$ on~$SM$,
given in local coordinates by
\begin{equation}
\d \Sigma_g(x,v)
=
\d S_x(v) \wedge \d V_g(x)
.
\end{equation}
The form~$\d \Sigma_g$ is natural as it coincides with the Riemannian volume form of the Sasaki metric on~$SM$.
Since~$\d V_g$ has as much regularity as~$g$, so does~$\d \Sigma_g$.

The~$L^2$-norm of a scalar function~$u$ on~$SM$ is denoted by~$\norm{u}_{L^2(SM)}$ and the~$L^2$-norm of a section~$V$ of~$N$ is denoted by~$\norm{V}_{L^2(N)}$. The~$L^2$-norms are induced by the inner products
\begin{equation}
\iip{u}{w}_{L^2(SM)}
\coloneqq
\int_{SM}
uw
\,\d \Sigma_g
\end{equation}
and
\begin{equation}
\iip{V}{W}_{L^2(N)}
\coloneqq
\int_{SM}
g_{ij}V^iW^j
\,\d \Sigma_g.
\end{equation}
We define the~$\norm{\cdot}_{H^1(SM)}$-norm of a function~$u \in C^1(SM)$ by
\begin{equation}
\norm{u}_{H^1(SM)}^2
=
\norm{u}_{L^2(SM)}^2
+
\norm{Xu}_{L^2(SM)}^2
+
\norm{\grad{v}u}_{L^2(N)}^2
+
\norm{\grad{h}u}_{L^2(N)}^2.
\end{equation}
The Sobolev space~$H^1(SM)$ is defined to be the completion of the subset of~$C^1(SM)$ that consists of functions with finite~$H^1(SM)$-norm. We denote by~$H^1_0(SM)$ the closure of~$C^1_0(SM)$ in~$H^1(SM)$.

Sobolev spaces for sections of~$N$ are defined in an analogous fashion. For a section~$V \in C^1(N)$ we define the two Sobolev norms
\begin{align}
\norm{V}_{H^1(N)}^2
=
\norm{V}_{L^2(N)}^2
+
\norm{XV}_{L^2(N)}^2
+
\norm{\dive{v}V}_{L^2(SM)}^2
+
\norm{\dive{h}V}_{L^2(SM)}^2
\end{align}
and
\begin{equation}
\norm{V}_{H^1(N,X)}^2
=
\norm{V}_{L^2(N)}^2
+
\norm{XV}_{L^2(N)}^2.
\end{equation}
The corresponding Sobolev spaces (the completions of~$C^1(N)$ under these norms) are denoted by~$H^1(N)$ and~$H^1(N,X)$, and the Sobolev spaces of sections vanishing on the boundary~$\partial(SM)$ are denoted by~$H^1_0(N)$ and~$H^1_0(N,X)$.

\begin{remark}
Contrary to what one might expect, the norm on~$H^1(N)$ defined above does not contain derivatives in all possible directions, as it only includes divergences in the vertical and horizontal directions.
We will use these norms only to estimate from above, so this omission of derivatives makes no difference.
\end{remark}

In the case where~$g$ is a~$C^{\infty}$-smooth Riemannian metric, we introduce one more Sobolev space,~$\SSob(SM)$. The defining norm on the dense subspace~$C^2(N)$ is
\begin{equation}
\norm{u}_{\SSob(SM)}^2
=
\norm{u}_{H^1(SM)}^2
+
\norm{Xu}_{H^1(SM)}^2
+
\norm{\grad{v}u}_{H^1(N,X)}^2.
\end{equation}

\begin{remark}
It is important to realize that we cannot define Sobolev spaces using smooth test functions as in the smooth case.
The reason is two-fold.
First, the natural structure of~$SM$ as an submanifold~$TM$ is not regular enough to define the function class~$C^{\infty}(SM)$.
Second, the differential operators themselves are not smooth.
Applying any of the differential operators immediately drops regularity to that of the coefficients, and they involve the metric tensor.
\end{remark}

\subsection{Differential operators on Sobolev spaces}

It is clear from the definitions that all of our differential operators are bounded~$H^1 \to L^2$. Thus all classically defined operators extend to operators between Sobolev spaces.
We therefore have the continuous operators
\begin{align}
X 
&\colon
H^1(SM) \to L^2(SM),
\\
X
&\colon
H^1(N) \to L^2(N),
\\
\grad{v},\grad{h}
&\colon
H^1(SM) \to L^2(N),
\quad\text{and}
\\
\dive{v},\dive{h}
&\colon
H^1(N) \to L^2(SM).
\end{align}

Basic integration by parts holds for the extended operators:
If~$u,w \in H^1(SM)$ and~$V,W \in H^1(N)$ and~$w$ and~$W$ vanish on the boundary, then
\begin{align}
\iip{Xu}{w}_{L^2(SM)}
&=
-\iip{u}{Xw}_{L^2(SM)},
\\
\iip{XV}{W}_{L^2(N)}
&=
-\iip{V}{XW}_{L^2(N)},
\\
\iip{\grad{v}u}{W}_{L^2(N)}
&=
-\iip{u}{\dive{v}W}_{L^2(SM)},
\quad\text{and}
\\
\iip{\grad{h}u}{W}_{L^2(N)}
&=
-\iip{u}{\dive{h}W}_{L^2(SM)}.
\end{align}
We can use the space~$C^1_0(SM)$ as test functions and~$C^1_0(N)$ as test sections.

\subsection{Switching between different unit sphere bundles}

Suppose we have two Riemannian metrics~$g,h \in C^{1,1}(T^2M)$ on the manifold~$M$.
Let~$S_gM$ and~$S_hM$ denote the corresponding unit sphere bundles.
There is a natural radial~$C^{1,1}$-diffeomorphism
\begin{equation}
s \colon S_gM \to S_hM, \quad s(x,v) = (x,v\abs{v}^{-1}_h).
\end{equation}
In section~\ref{sec:lemmas-in-low-reg} we will have three Riemannian metrics~$g \in C^{1,1}(T^2M)$ and~$\alf{g},h \in C^{\infty}(T^2M)$ with certain roles.
In this case we will denote the corresponding radial~$C^{1,1}$-diffeomorphisms by 
\begin{equation}
\alf{s} \colon S_{\alf{g}}M \to S_hM,
\quad
\alf{r} \colon S_gM \to S_{\alf{g}}M
\quad\text{and}\quad
s \colon S_gM \to S_hM.
\end{equation}
In section~\ref{sec:lemmas-in-low-reg} we frequently use the convention that the bundles related to~$\alf{g}$ are denoted~$\alf{S}M \coloneqq S_{\alf{g}}M$ and~$\alf{N} \coloneqq N_{\alf{g}}$, the operators related to~$\alf{g}$ are decorated with~$\alpha$ on top or as a subscript, the bundles and the operators related to~$h$ are decorated with subscripts~$h$, and the bundles and the operators related to the metric~$g$ are written without decorations.

\begin{remark}
\label{rem:regularity-preserved}
We can switch between sphere bundles and corresponding Sobolev spaces using pullbacks along radial functions.
If~$u$ is a scalar function on~$SM$, then~$(s^{-1})^*u$\NTR{Changed $s$ to $s^{-1}$.} is a scalar function on~$S_hM$.
To see that pullback behaves well on the Sobolev scale, note that the~$H^1(SM)$-norm controls all possible derivatives on~$SM$ since $TSM = \R X \oplus \V \oplus \H$.
Thus the~$H^1(SM)$-norm is equivalent to
\begin{equation}
\label{eq:alt-norm}
\norm{u}
=
\norm{u}_{L^2(SM)}
+
\norm{\d_{SM} u}_{L^2(T^*SM)}
\end{equation}
with the norm of the differential interpreted with respect to any Riemannian metric on~$SM$.
With the norm~\eqref{eq:alt-norm} we see that regularity on Sobolev scale is preserved, since $(s^{-1})^{\ast}(\d_{SM}u) = \d_{S_hM}(u \circ s^{-1})$\NTR{Changed $s$ to $s^{-1}$.} by standard properties of the pullback.\NTR{Recast sentence.}
\end{remark}

Remark~\ref{rem:regularity-preserved} allows us to prove continuous Sobolev embeddings between Sobolev spaces of low regularity metrics.
We present one example that will be useful to us later.
Let~$g \in C^{1,1}(T^2M)$ and~$h \in C^\infty(T^2M)$ be two Riemannian metrics on~$M$.
If~$u \in \Lip(SM)$, then~$(s^{-1})^{\ast}u \in \Lip(S_hM)$.
\NTR{Switched $s^{\ast}u$ to $(s^{-1})^{\ast}u$ so that it maps the right way.}
Since~$h$ is~$C^{\infty}$-smooth, we have~$(s^{-1})^{\ast}u \in H^1(S_hM)$\NTR{As above.}.
Then since~$\norm{u}_{H^1(SM)} \lesssim \norm{(s^{-1})^{\ast}u}_{H^1(S_hM)}$\NTR{As above.} by remark~\ref{rem:regularity-preserved}, we see that~$u \in H^1(SM)$.
We have shown that the inclusion~$\Lip(SM) \subseteq H^1(SM)$ holds even when the metric tensor is only~$C^{1,1}$.

\section{Lemmas in low regularity}
\label{sec:lemmas-in-low-reg}

\subsection{The Pestov identity}

In this subsection~$(M,g)$ is a simple~$C^{1,1}$ Riemannian manifold.
We prove a variant of the commutator formula $[X,\grad{v}] = -\grad{h}$ and the Pestov identity on~$(M,g)$.
First, we show that both results are valid for Sobolev functions on a manifold equipped with a~$C^{\infty}$-smooth Riemannian metric.
Then we show that only~$C^{1,1}$ regularity of the Riemannian metric is needed. The main focus of the subsection is on proving the Pestov identity of lemma~\ref{lma:c11-pestov}.



\begin{lemma}
\label{lma:cinfty-commutator}
Let~$(M,h)$ be a compact smooth manifold with a smooth boundary, where~$h$ is a~$C^{\infty}$-smooth Riemannian metric.
The commutator formula~$[X,\grad{v}] = -\grad{h}$ holds in the~$H^1$ sense on~$(M,h)$:
For~$u \in H^1_0(S_hM)$ and~$V \in C^1(N_h)$, we have
\begin{equation}
\iip{\grad{h}_hu}{V}_{L^2(N_h)}
=
\iip{\grad{v}_hu}{X_hV}_{L^2(N_h)}
-
\iip{X_hu}{\dive{v}_hV}_{L^2(S_hM)}.
\end{equation}
\end{lemma}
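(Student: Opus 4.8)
The plan is to reduce the asserted weak identity to the classical \emph{strong} commutator formula
\[
X_h\grad{v}_hu - \grad{v}_hX_hu = -\grad{h}_hu,
\]
which holds pointwise for $C^2$ functions on the smooth manifold $(M,h)$ (see~\cite{PSUGIPETD}), and then to move one derivative off $u$ and onto $V$ by integration by parts. Since the resulting identity involves only first-order operators applied to $u$, I would first prove it for smooth $u$ vanishing on $\partial(S_hM)$, and then extend it to all $u \in H^1_0(S_hM)$ by density. Note that $V \in C^1(N_h)$ together with the smoothness of $h$ guarantees $X_hV \in C(N_h) \subseteq L^2(N_h)$ and $\dive{v}_hV \in C(S_hM) \subseteq L^2(S_hM)$, so every pairing below is well defined.

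First I would take $u \in C^\infty(S_hM)$ with $u|_{\partial(S_hM)} = 0$ and pair the strong identity with $V$ in $L^2(N_h)$, obtaining
\[
\iip{\grad{h}_hu}{V}_{L^2(N_h)}
=
\iip{\grad{v}_hX_hu}{V}_{L^2(N_h)}
-
\iip{X_h\grad{v}_hu}{V}_{L^2(N_h)}.
\]
For the first term on the right I would integrate by parts in the vertical variable. Because each fiber $(S_xM,h(x))$ is a \emph{closed} Riemannian manifold, the fiberwise divergence theorem yields the exact adjunction $\iip{\grad{v}_hf}{W}_{L^2(N_h)} = -\iip{f}{\dive{v}_hW}_{L^2(S_hM)}$ with no boundary contribution whatsoever, so this term equals $-\iip{X_hu}{\dive{v}_hV}_{L^2(S_hM)}$. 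For the second term I would use the skew-adjointness of $X_h$; the integration by parts produces a boundary term carrying the factor $\grad{v}_hu|_{\partial(S_hM)}$, namely one proportional to $\int_{\partial(S_hM)}\ip{\grad{v}_hu}{V}\,\ip{v}{\nu}\,\d S$. This term vanishes: since $u$ is identically zero on the whole fiber $S_xM$ for every $x \in \partial M$, its derivative \emph{along} the fiber, which is exactly $\grad{v}_hu$, vanishes on $\partial(S_hM)$. Hence the second term equals $-\iip{\grad{v}_hu}{X_hV}_{L^2(N_h)}$, and combining the two gives precisely the claimed formula for smooth $u$.

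Finally I would remove the smoothness assumption. Since $h$ is smooth, $S_hM$ is a smooth compact manifold with boundary, so smooth functions vanishing on $\partial(S_hM)$ are dense in $H^1_0(S_hM)$. It therefore suffices to check that both sides of the asserted identity depend continuously on $u \in H^1(S_hM)$ with $V$ held fixed, which is immediate: $\grad{h}_h$, $\grad{v}_h$ and $X_h$ are bounded $H^1(S_hM) \to L^2$, while $X_hV$ and $\dive{v}_hV$ are fixed elements of $L^2(N_h)$ and $L^2(S_hM)$. Passing to the limit then delivers the identity for every $u \in H^1_0(S_hM)$.

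I expect the main obstacle to be the careful bookkeeping of boundary terms rather than any deep computation. Two observations carry the argument: that vertical integration by parts is boundary-free because the fibers are closed, and that $\grad{v}_hu$ vanishes on $\partial(S_hM)$ as soon as $u$ does, which is exactly what allows the $X_h$-integration by parts to go through cleanly even though $V$ is \emph{not} assumed to vanish on the boundary. Once the classical strong commutator identity is in hand, the remaining steps are routine.
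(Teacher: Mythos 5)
Your argument is correct, but it arranges the derivatives differently from the paper. The paper never differentiates~$u$ twice: it starts from~$\iip{\grad{h}_hu}{V}=-\iip{u}{\dive{h}_hV}$, applies the \emph{divergence} version of the commutator formula, $X_h\dive{v}_hV-\dive{v}_hX_hV=-\dive{h}_hV$ (\cite[Lemma~2.1]{PSUIDBTTT}), to the smooth section~$V$, and then integrates by parts back onto~$u\in H^1_0(S_hM)$ directly; the only approximation step is at the very end, relaxing~$V$ from~$C^\infty(N_h)$ to~$C^1(N_h)$. You instead apply the dual \emph{gradient} version of the commutator to a smooth approximation of~$u$ and then pass to the limit in~$u$, which works for~$V\in C^1(N_h)$ from the outset but requires two extra observations that the paper's route avoids: that~$\grad{v}_hu$ inherits the vanishing of~$u$ on~$\partial(S_hM)$ (needed to kill the boundary term in the~$X_h$ integration by parts, since~$V$ does not vanish there), and that smooth functions vanishing on~$\partial(S_hM)$ are dense in~$H^1_0(S_hM)$ with both sides of the identity continuous in the~$H^1$ norm. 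Both of these you justify correctly --- the vertical gradient only differentiates along the boundary fibers on which~$u$ vanishes identically, and the density plus the~$H^1\to L^2$ boundedness of~$X_h$, $\grad{v}_h$, $\grad{h}_h$ close the limit --- so your proof is complete; the trade-off is simply whether the approximation burden is carried by~$u$ or by~$V$.
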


\begin{proof}
Let~$u \in H^1_0(S_hM)$ and~$V \in C^{\infty}(N_h)$.
Since~$V$ is smooth, by~\cite[Lemma 2.1.]{PSUIDBTTT} we have
\begin{equation}
X_h\dive{v}_hV
-
\dive{v}_hX_hV
=
-\dive{h}_hV.
\end{equation}
Thus
\begin{equation}
\begin{split}
\iip{\grad{h}_hu}{V}_{L^2(N_h)}
&=
-
\iip{u}{\dive{v}_hX_hV}_{L^2(S_hM)}
+
\iip{u}{X\dive{v}_hV}_{L^2(S_hM)}
\\
&=
\iip{\grad{v}_hu}{X_hV}_{L^2(N_h)}
-
\iip{X_hu}{\dive{v}_hV}_{L^2(S_hM)},
\end{split}
\end{equation}
where the last equality holds since~$u \in H^1_0(S_hM)$, and since~$X_hV \in C^{\infty}(N_h)$ and~$\dive{v}_hV \in C^{\infty}(S_hM)$.
The same identity holds for~$V \in C^1(N_h)$ by approximation, since only first order derivatives appear in the statement.
\end{proof}

\begin{lemma}
\label{lma:u-sobolev-pestov}
Let~$(M,h)$ be a compact smooth manifold with a smooth boundary, where~$h$ is a~$C^{\infty}$-smooth Riemannian metric.
Suppose that~$u \in K^2(S_hM)$ vanishes on the boundary~$\partial(S_hM)$.
Then
\begin{equation}
\label{eq:u-sobolev-pestov}
\norm{\grad{v}_hX_hu}^2_{L^2(N_h)}
=
Q_h\left(\grad{v}_hu\right)
+
(n-1)
\norm{X_hu}^2_{L^2(S_hM)},
\end{equation}
where~$Q_h$ is the quadratic form defined for~$W \in H^1_0(N_h,X_h)$ by
\begin{equation}
Q_h(W)
=
\norm{X_hW}^2_{L^2(N_h)}
-
\iip{R_hW}{W}_{L^2(N_h)}.
\end{equation}
\end{lemma}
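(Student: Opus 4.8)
The plan is to prove~\eqref{eq:u-sobolev-pestov} first for smooth $u$, where the full classical calculus on $S_hM$ is available because $h$ is $C^\infty$, and then to extend it to every $u \in K^2(S_hM)$ vanishing on the boundary by a density argument. So I would begin with $u \in C^\infty(S_hM)$ satisfying $u|_{\partial(S_hM)} = 0$, for which all the operators $X_h$, $\grad{v}_h$, $\grad{h}_h$, $\dive{v}_h$, $\dive{h}_h$ and $R_h$ are smooth and every integration by parts is legitimate.

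For such $u$ the starting point is the commutator identity of lemma~\ref{lma:cinfty-commutator}, which for smooth arguments reads $\grad{v}_hX_hu = X_h\grad{v}_hu + \grad{h}_hu$. Taking $L^2(N_h)$-norms gives
\[
\norm{\grad{v}_hX_hu}^2_{L^2(N_h)}
=
\norm{X_h\grad{v}_hu}^2_{L^2(N_h)}
+
2\iip{X_h\grad{v}_hu}{\grad{h}_hu}_{L^2(N_h)}
+
\norm{\grad{h}_hu}^2_{L^2(N_h)},
\]
and the heart of the matter is to show that the last two terms combine into $-\iip{R_h\grad{v}_hu}{\grad{v}_hu}_{L^2(N_h)} + (n-1)\norm{X_hu}^2_{L^2(S_hM)}$. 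I would do this by integrating the cross term by parts in $X_h$ and invoking the second standard commutator $[X_h,\grad{h}_h] = R_h\grad{v}_h$ (classical in smooth geometry, cf.~\cite{PSUIDBTTT}) to extract the curvature term $-\iip{R_h\grad{v}_hu}{\grad{v}_hu}_{L^2(N_h)}$; the remaining pieces are rearranged using the adjoint relations between the gradients and divergences together with a fiberwise identity of the form $\dive{v}_h\grad{h}_hu - \dive{h}_h\grad{v}_hu = (n-1)X_hu$, whose constant reflects the dimension $n-1$ of the fibers $S_xM$ and supplies the coefficient in front of $\norm{X_hu}^2$.

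I expect the main obstacle to be this middle computation. The boundary integrals generated by the integrations by parts are in fact harmless: since $u$ vanishes on the whole fiber $S_xM$ over each boundary point $x \in \partial M$, its vertical gradient $\grad{v}_hu$ vanishes on all of $\partial(S_hM)$, and this makes every boundary contribution vanish. The real work is the algebraic rearrangement, namely producing the curvature term correctly and, above all, extracting the dimensional constant $n-1$, which is a genuinely fiber-geometric effect --- the only place where the dimension of $M$ enters --- and requires careful bookkeeping of the vertical operators on the fibers.

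Finally, to upgrade from smooth $u$ to arbitrary $u \in K^2(S_hM)$ with vanishing boundary values, I would approximate $u$ by smooth functions $u_j$ vanishing on $\partial(S_hM)$ and converging to $u$ in the $K^2(S_hM)$-norm, which is possible since $K^2(S_hM)$ is by definition the completion of such functions. The point is that every term surviving in~\eqref{eq:u-sobolev-pestov} is continuous with respect to the $K^2$-norm: the bound $\norm{X_hu}_{H^1(S_hM)} \le \norm{u}_{K^2(S_hM)}$ controls both $\norm{\grad{v}_hX_hu}_{L^2(N_h)}$ and $\norm{X_hu}_{L^2(S_hM)}$, the bound $\norm{\grad{v}_hu}_{H^1(N_h,X_h)} \le \norm{u}_{K^2(S_hM)}$ controls $\norm{X_h\grad{v}_hu}_{L^2(N_h)}$, and boundedness of $R_h$ on $L^2(N_h)$ controls $\iip{R_h\grad{v}_hu}{\grad{v}_hu}_{L^2(N_h)}$. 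Passing to the limit $j \to \infty$ then yields~\eqref{eq:u-sobolev-pestov} for every admissible $u$.
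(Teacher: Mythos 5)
Your proposal is correct and follows essentially the same route as the paper: establish the identity for smooth $u$ vanishing on $\partial(S_hM)$ and then pass to general $u \in K^2(S_hM)$ by density, using exactly the continuity estimates you list ($\norm{X_hu}_{H^1}$, $\norm{\grad{v}_hu}_{H^1(N_h,X_h)}$ and the $L^\infty$-boundedness of $R_h$). The only difference is that the paper does not rederive the smooth Pestov identity via the commutator manipulations you sketch, but simply cites it from the literature; your outline of that derivation is the standard one and is fine (up to a sign convention in the fiberwise identity $\dive{h}_h\grad{v}_h - \dive{v}_h\grad{h}_h = (n-1)X_h$).
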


\begin{proof}
Since~$u \in \SSob(S_hM)$ and~$u$ vanishes on the boundary~$\partial(S_hM)$, there is a sequence~$\big(\bet{u}\big)_{\beta \in \N}$ of smooth functions on~$S_hM$ vanishing on~$\partial(S_hM)$ so that~$\bet{u} \to u$ in~$\SSob(S_hM)$.
We see that
\begin{equation}
\label{eq:proof-of-u-sob-p-1}
\norm{\grad{v}_hX_h\bet{u} - \grad{v}_hX_hu}^2_{L^2(N_h)}
\le
\norm{X_h\bet{u} - X_hu}^2_{H^1(S_hM)}
\le
\norm{\bet{u} - u}^2_{\SSob(S_hM)}
\end{equation}
and
\begin{equation}
\label{eq:proof-of-u-sob-p-2}
\norm{X_h\bet{u} - X_hu}^2_{L^2(N_h)}
\le
\norm{\bet{u} - u}^2_{H^1(S_hM)}
\le
\norm{\bet{u} - u}^2_{\SSob(S_hM)}.
\end{equation}
Therefore~$\grad{v}_hX_h\bet{u} \to \grad{v}_hX_hu$ in~$L^2(N_h)$ and~$X_h\bet{u} \to X_hu$ in~$L^2(S_hM)$ as $\beta \to \infty$. Additionally, since the curvature operator~$R$ of the metric~$h$ continuously maps $L^{\infty}(N_h) \to L^{\infty}(N_h)$, we have
\begin{equation}
\label{eq:proof-of-u-sob-p-3}
\norm{\grad{v}_h\bet{u} - \grad{v}_hu}^2_{L^2(N_h)}
\le
\norm{\bet{u} - u}^2_{H^1(S_hM)}
\le
\norm{\bet{u} - u}^2_{\SSob(S_hM)}
\end{equation}
and
\begin{equation}
\label{eq:proof-of-u-sob-p-4}
\norm{R_h\grad{v}_h\bet{u} - R_h\grad{v}_hu}^2_{L^2(N_h)}
\lesssim
\norm{\bet{u} - u}^2_{H^1(S_hM)}
\le
\norm{\bet{u} - u}^2_{\SSob(S_hM)}.
\end{equation}
Thus~$Q_h\Big(\grad{v}_h\bet{u}\Big) \to Q_h\Big(\grad{v}_hu\Big)$ as~$\beta \to \infty$.
By the Pestov identity for smooth functions and metrics (see~\cite[Remark 2.3.]{PSUIDBTTT}) we have
\begin{equation}
\label{eq:u-sobolev-pestov-proof}
\norm{\grad{v}_hX_h\bet{u}}^2_{L^2(N_h)}
=
Q_h\left(\grad{v}_h\bet{u}\right)
+
(n-1)
\norm{X_h\bet{u}}^2_{L^2(S_hM)}.
\end{equation}
We now let $\beta \to \infty$ in~\eqref{eq:u-sobolev-pestov-proof}.
By our estimates~\eqref{eq:proof-of-u-sob-p-1},~\eqref{eq:proof-of-u-sob-p-2},~\eqref{eq:proof-of-u-sob-p-3}, and~\eqref{eq:proof-of-u-sob-p-4} we end up with the claimed identity~\eqref{eq:u-sobolev-pestov}.
\end{proof}

The rest of this section focuses on showing that we can replace the~$C^{\infty}$-smooth Riemannian metric~$h$ in lemmas~\ref{lma:cinfty-commutator} and~\ref{lma:u-sobolev-pestov} by a~$C^{1,1}$ regular Riemannian metric.
Let~$(M,g)$ be a~$C^{1,1}$ simple Riemannian manifold.
Next, we construct approximations of~$g$ by~$C^{\infty}$-smooth Riemannian metrics~$\alf{g}$.

Let $(x^1,\dots,x^n)$ be the usual Cartesian coordinates on the Euclidean closed ball $M\subset\R^n$ and extend all components~$g_{ij} \in C^{1,1}(M)$ of~$g$ to functions~$\overline{g}_{ij} \in C^{1,1}(\R^n)$. Such extensions exist since~$C^{1,1} = W^{2,\infty}$ and the boundary of~$M$ is smooth (see~\cite[Chapter~6,~Theorem~5]{SteinSIDPF}).
Let us then choose a non-negative compactly supported smooth function~$\varphi \colon \R^n \to \R$ with unit integral and define a sequence of standard mollifiers~$\alf{\varphi}(x) = \alpha^n\varphi(\alpha x)$ for~$\alpha \in \N$.
Then we define
\begin{equation}
\label{eq:sequence-of-approximating-metrics}
\alf{g}_{ij}
\coloneqq
(\alf{\varphi}\,\ast\,\overline{g}_{ij})|_M
\in
C^{\infty}(M).
\end{equation}

\begin{lemma}
\label{lma:approximate-metrics}
Let $(M,g)$ be a simple $C^{1,1}$ manifold. Let~$h$ be a smooth reference metric on~$M$. There exists a sequence $\left(\alf{g}\right)_{\alpha \in \N}$ of~$C^{\infty}$-smooth metrics on~$M$ such that
\begin{enumerate}[(1)]
\setlength\itemsep{0.2em}
\item \label{metrics1} $\alf{g}_{ij} \to g_{ij}$ in~$W^{2,2}_h(M)$ and in~$W^{1,\infty}_h(M)$,
\item \label{metrics2} $\alf{g}^{ij} \to g^{ij}$  in~$W^{1,2}_h(M)$\NTR{Fixed indices.} and in~$L^\infty_h(M)$,
\item \label{metrics3} $\alf{\Gamma}^i_{\ jk} \to \Gamma^i_{\ jk}$ in~$W^{1,1}_h(M)$ and in~$L^\infty_h(M)$,
\item \label{metrics4} $\alf{R}^i_{\ jkl} \to R^i_{\ jkl}$ in~$L^1_h(M)$.
\end{enumerate}
\end{lemma}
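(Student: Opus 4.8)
The plan is to reduce every claim to elementary properties of mollification in Euclidean space and then bootstrap through the algebraic formulas defining $\alf{g}^{ij}$, $\alf{\Gamma}^i_{\ jk}$ and $\alf{R}^i_{\ jkl}$ in terms of $\alf{g}_{ij}$ and its coordinate derivatives. Because $h$ is smooth and $M$ is compact, the covariant $W^{k,p}_h$-norms are equivalent to the corresponding norms built from Cartesian coordinate derivatives: the two differ by multiplication and addition of fixed smooth, hence bounded, tensors involving the Christoffel symbols of $h$. Thus throughout it suffices to prove convergence of coordinate derivatives in the stated Lebesgue spaces.

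The foundation is the first claim. Since $g_{ij} \in C^{1,1}(M) = W^{2,\infty}(M)$ and $M$ is bounded, we have $g_{ij} \in W^{2,2}(M)$, and standard mollifier theory gives $\alf{g}_{ij} \to g_{ij}$ in $W^{2,2}$. For the $W^{1,\infty}$-statement I would use that the first coordinate derivatives $\partial_m g_{ij}$ are Lipschitz, hence continuous; since mollification commutes with differentiation, $\partial_m \alf{g}_{ij} = \alf{\varphi} \ast \partial_m \overline{g}_{ij}$ converges uniformly on the compact set $M$, as does $\alf{g}_{ij}$ itself. The crucial bookkeeping to record here is the hierarchy of convergence for the derivatives of the metric: the zeroth and first order derivatives converge in $L^\infty$, but the second order derivatives $\partial^2 \alf{g}_{ij}$ converge only in $L^p$ for every finite $p$ (in particular in $L^1$), because $\partial^2 g_{ij}$ is merely bounded and not continuous, so no uniform convergence is available at the top order.

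For the remaining claims I would differentiate the defining algebraic expressions and track at which order the rough second derivatives of $g$ enter. First one checks that $\alf{g}_{ij}$ is a genuine metric for large $\alpha$: uniform convergence together with the uniform positive definiteness of $g$ on compact $M$ makes $\alf{g}_{ij}$ uniformly positive definite, so $\alf{g}^{ij}$ is well defined with $\det \alf{g}$ bounded away from zero, and inversion is smooth with uniformly bounded derivatives. Writing $\partial_m \alf{g}^{ij} = -\alf{g}^{ik}\alf{g}^{jl}\partial_m \alf{g}_{kl}$ and differentiating once more, the first derivatives of $\alf{g}^{ij}$ involve only zeroth and first order data of the metric and hence converge in $L^\infty$, while the second derivatives pick up exactly one factor of $\partial^2 \alf{g}_{kl}$ and therefore converge only in $L^p$ for finite $p$; this yields the second claim. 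The Christoffel symbols $\alf{\Gamma}^i_{\ jk} = \tfrac12 \alf{g}^{il}(\partial_j \alf{g}_{kl} + \partial_k \alf{g}_{jl} - \partial_l \alf{g}_{jk})$ are products of $L^\infty$-convergent factors, giving $L^\infty$-convergence, while $\partial \alf{\Gamma}$ contains a single $\partial^2 \alf{g}$ term and so converges in $L^1$, giving the third claim. Finally the curvature components $\alf{R}^i_{\ jkl}$ form a sum of a $\partial \alf{\Gamma}$ part, which converges in $L^1$ by the third claim, and a $\alf{\Gamma}\cdot\alf{\Gamma}$ part, which converges in $L^\infty$; hence the fourth claim holds in $L^1$.

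The only genuine subtlety, and the point that dictates the precise integrability exponents in the statement, is that the mollification of an $L^\infty$ function that fails to be continuous converges only in $L^p$ for $p < \infty$. Every loss of integrability across the four claims is accounted for by counting how many times a second derivative of the metric appears in the relevant expression. The remaining work is the routine but careful product-rule bookkeeping together with the norm-equivalence reduction to Cartesian derivatives; I expect no analytic difficulty beyond the top-order convergence just described.
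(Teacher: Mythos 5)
Your proposal is correct and follows essentially the same route as the paper: mollify a Sobolev extension of $g_{ij}$, verify positive definiteness for large $\alpha$, and propagate convergence through the algebraic formulas for $\alf{g}^{ij}$, $\alf{\Gamma}^i_{\ jk}$ and $\alf{R}^i_{\ jkl}$, with the integrability exponents dictated by how many factors of $\partial^2\alf{g}$ each expression contains. The only cosmetic difference is that the paper obtains $L^\infty$-convergence of the inverse via the determinant--adjugate formula while you invoke smoothness of matrix inversion on uniformly positive definite matrices; both arguments are equivalent in substance.
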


\begin{proof}
For each~$\alpha \in \N$ let $\alf{g} \coloneqq \alf{g}_{ij}\d x^i \otimes \d x^j \in C^{\infty}(T^2M)$, where~$\alf{g}_{ij}$ are as in~\eqref{eq:sequence-of-approximating-metrics}.
We will show that a subsequence of the sequence~$\left(\alf{g}\right)_{\alpha \in \N}$ consists of smooth Riemannian metrics and satisfies conditions~\ref{metrics1}--\ref{metrics4}.

We see that for large~$\alpha$ each~$\alf{g}$ is a~$C^\infty$-smooth Riemannian metric.
Smoothness follows standard properties of the mollifiers~$\alf{\varphi}$.
Each~$\alf{g}$ is symmetric by construction.
For large~$\alpha$ each~$\alf{g}$ is positive definite since this is an open condition and pointwise convergence~$\alf{g}_{ij} \to g_{ij}$ follows from continuity and item~\ref{metrics1}.

\ref{metrics1}
Because $\overline{g}_{ij}$ is compactly supported and in both spaces~$W^{2,2}(\R^n)$ and~$W^{2,\infty}(\R^n)$, the convolution converges $\alf{\varphi}\,\ast\,\overline{g}_{ij} \to \overline{g}_{ij}$ in both spaces~$W^{2,2}(\R^n)$ and~$W^{1,\infty}(\R^n)$.
This implies convergence\NTR{Fixed typo.} in the corresponding function spaces over the subdomain $M\subset\R^n$.


\ref{metrics2}
Let us denote the adjugate of a matrix~$A$ by~$\adj(A)$; we interpret rank two tensor fields as matrix-valued functions on $M\subset\R^n$.
By item~\ref{metrics1} we have
\begin{equation}
\label{eq:det-adj-supp}
\det(\alf{g}) \to \det(g)
\quad
\text{and}
\quad
\adj(\alf{g})^{ij} \to \adj(g)^{ij}
\end{equation}
in~$L^\infty_h(M)$.
Thus for sufficiently large~$\alpha$ the matrices~$\alf{g}$ are uniformly invertible in the sense that 
\begin{equation}
\norm{\det(\alf{g})^{-1}}_{L^\infty_h(M)}
\le
C
.
\end{equation}
Since
\begin{equation}
\alf{g}^{ij}(x)
=
\det(\alf{g}(x))^{-1}
\adj(\alf{g}(x))^{ij},
\end{equation}
we have that $\alf{g}^{ij}\to g^{ij}$ in~$L^\infty_h(M)$.
Derivatives of the inverse satisfy $\partial_k\alf{g}^{ij} = -\alf{g}^{il}(\partial_k\alf{g}_{lm})\alf{g}^{mj}$, which implies convergence of the derivatives in~$L^2_h(M)$.


\ref{metrics3}
This follows from\NTR{Fixed indices in the following formula.}
\begin{equation}
\alf{\Gamma}^i_{\ jk}
=
\frac12\alf{g}^{il}
\left(
\partial_j\alf{g}_{kl}
+
\partial_k\alf{g}_{jl}
-
\partial_l\alf{g}_{jk}
\right)
\end{equation}
and items~\ref{metrics1} and~\ref{metrics2}.

\ref{metrics4}
This follows from
\begin{equation}
\alf{R}_{ijk}^{\ \ \ l}
=
\partial_i\alf{\Gamma}^l_{\ jk}
-
\partial_j\alf{\Gamma}^m_{\ jk}
+
\alf{\Gamma}^m_{\ jk}\alf{\Gamma}^l_{\ im}
-
\alf{\Gamma}^m_{\ ik}\alf{\Gamma}^l_{\ jm}
\end{equation}
and item~\ref{metrics3}.
\end{proof}

Next we prove the Pestov identity for~$C^{1,1}$ regular metrics. In the context of lemma~\ref{lma:c11-pestov} the manifold~$(M,g)$ is simple~$C^{1,1}$, the Riemannian metric~$g$ is~$C^{1,1}$ regular, the function~$u$ is in~$\Lip_0(SM)$ and satisfies~$Xu \in H^1(SM)$ and~$\grad{v}u \in H^1(N,X)$.

\begin{proof}[Proof of lemma~\ref{lma:c11-pestov}]
Choose a smooth reference Riemannian metric~$h$ on~$M$ and let~$\left(\alf{g}\right)_{\alpha \in \N}$ be a sequence of smooth metrics approximating~$g$ as in lemma~\ref{lma:approximate-metrics}.
For each~$\alpha \in \N$ denote $\alf{u} \coloneqq u \circ \alf{r}$. Then by remark \ref{rem:regularity-preserved} we have~$\alf{u} \in H^1_0(\alf{S}M)$,~$\alf{X}\alf{u} \in H^1(\alf{S}M)$ and~$\alfGrad{v}\alf{u} \in H^1(\alf{N},\alf{X})$, which implies that~$\alf{u} \in \SSob(\alf{S}M)$ and~$\alf{u}|_{\partial(\alf{S}M)} = 0$.
For each~$\alpha$ an application of lemma~\ref{lma:u-sobolev-pestov} gives
\begin{equation}
\label{eq:pestov-alfa}
\norm{\alfGrad{v}\alf{X}\alf{u}}^2_{L^2(\alf{N})}
=
Q_{\alpha}\left(\alfGrad{v}\alf{u}\right)
+
(n-1)
\norm{\alf{X}\alf{u}}^2_{L^2(\alf{S}M)}.
\end{equation}
We will show that
\begin{equation}
\label{eq:alfa-gradv-x}
\lim_{\alpha \to \infty}
\norm{\alfGrad{v}\alf{X}\alf{u}}^2_{L^2(\alf{N})}
=
\norm{\grad{v}Xu}^2_{L^2(N)}.
\end{equation}
Similar arguments can be used to deduce that
\begin{equation}
\label{eq:alpha-q}
\lim_{\alpha \to \infty}
Q_{\alpha}\left(\alfGrad{v}\alf{u}\right)
=
Q\left(\grad{v}u\right)
\end{equation}
and
\begin{equation}
\label{eq:alpha-x}
\lim_{\alpha \to \infty}
\norm{\alf{X}\alf{u}}^2_{L^2(\alf{S}M)}
=
\norm{Xu}^2_{L^2(SM)}
.
\end{equation}
Then letting $\alpha \to \infty$ in equation~\eqref{eq:pestov-alfa} proves the claim of the theorem.
Since the arguments showing equations~\eqref{eq:alpha-q} and~\eqref{eq:alpha-x} are analogous to what is presented below, we omit them.
(The fact that components of the curvature tensor only converge in~$L^1$ and not in~$L^\infty$ is where the assumption $u\in\Lip(SM)$ is useful.)
Coordinate formulas required to show equations~\eqref{eq:alpha-q} and~\eqref{eq:alpha-x} are given in appendix~\ref{app:coord-forms-l2-norms}.

For any~$L^p$ convergence to make sense, we fix~$S_hM$ to be our common reference bundle for objects to be integrated on.
First we study how the~$L^2(\alf{N})$ norm on the left-hand side of~\eqref{eq:alfa-gradv-x} transforms under~$\alf{s}$.
Let~$\tilde u \coloneqq \alf{u} \circ \alf{s}^{-1}$ and fix~$\hat z \in S_hM$ and~$z \in \alf{S}M$ such that~$\alf{s}(z) = \hat z$.
By basic properties of pushforwards we have
\begin{equation}
\left(\left((\alf{s}_{\ast}\alf{X})\tilde u\right) \circ \alf{s}\right)(z)
=
(\alf{s}_{\ast}\alf{X})_{\hat z}\tilde u
=
\alf{X}_{z}(\tilde u \circ \alf{s})
=
\alf{X}_{z}\alf{u}.
\end{equation}
Thus
\begin{equation}
(\alf{s}_{\ast}\alf{\partial}^j)_{\hat z}
\left(
(\alf{s}_{\ast}\alf{X})\tilde u
\right)
=
\alf{\partial}^j_{z}
\left(
(\alf{s}_{\ast}\alf{X})\tilde u \circ \alf{s}
\right)
=
\alf{\partial}^j_{z}
\left(
\alf{X}\alf{u}
\right).
\end{equation}
Since~$\pi(z) = \pi(\hat z) \in M$ we get
\begin{equation}
\begin{split}
\norm{\alfGrad{v}\alf{X}\alf{u}}^2_{L^2(\alf{N})}
&=
\int_{z \in \alf{S}M}
\alf{g}_{ij}(\pi(z))
\left(
\alf{\partial}^i_{z}(\alf{X}\alf{u})
\right)
\left(
\alf{\partial}^j_{z}(\alf{X}\alf{u})
\right)
\,\d\alf{\Sigma}(z)
\\
&=
\int_{\hat z \in S_hM}
\alf{g}_{ij}(\pi(\hat z))
\left(
\alf{\partial}^i_{\alf{s}^{-1}(\hat z)}(\alf{X}\alf{u})
\right)
\\
&\qquad\qquad
\times
\left(
\alf{\partial}^j_{\alf{s}^{-1}(\hat z)}(\alf{X}\alf{u})
\right)
\abs{
\det
\left(
\d \alf{s}^{-1}_{\hat z}
\right)
}
\,\d\Sigma_h(\hat z)
\\
&=
\int_{\hat z \in S_hM}
\alf{g}_{ij}(\pi(\hat z))
\left(
(\alf{s}_{\ast}\alf{\partial}^i)_{\hat z}
\left(
(\alf{s}_{\ast}\alf{X})\tilde u
\right)
\right)
\\
&\qquad\qquad
\times
\left(
(\alf{s}_{\ast}\alf{\partial}^j)_{\hat z}
\left(
(\alf{s}_{\ast}\alf{X})\tilde u
\right)
\right)
\abs{
\det
\left(
\d \alf{s}^{-1}_{\hat z}
\right)
}
\,\d\Sigma_h(\hat z).
\end{split}
\end{equation}
An analogous formula holds for the right-hand side of equation~\eqref{eq:alfa-gradv-x}. Note that~$\tilde u = u \circ s^{-1}$. Thus we see that to prove equation~\eqref{eq:alfa-gradv-x} we need to prove the following two items.
\begin{enumerate}[(i)]
    \item\label{proof-of-pestov1}
    $
    (\alf{s}_{\ast}\alf{\partial}^j)
    \left(
    (\alf{s}_{\ast}\alf{X})\tilde u
    \right)
    \to 
    (s_{\ast}\partial^j)
    \left(
    (s_{\ast}X)\tilde u
    \right)
    $
    in $L^\infty(S_hM)$.
    \item\label{proof-of-pestov2}
    $
    \det
    \left(
    \d \alf{s}^{-1}
    \right)
    \to
    \det
    \left(
    \d s^{-1}
    \right)
    $
    in $L^\infty(S_hM)$.
\end{enumerate}

The push-forward~$\alf{s}_{\ast}$ has a useful block matrix representation in the coordinates of the unit sphere bundles.
Let~$(x,\alf{w}) \in \alf{S}M$ and~$(x,v) \in S_hM$ correspond to each other through~$\alf{s}(x,\alf{w}) = (x,v)$.
To~$(x,\alf{w})$ and~$(x,v)$ we associate the coordinate vector fields $\partial_{x^1},\dots,\partial_{x^n},\partial_{\alf{w}^1},\dots,\partial_{\alf{w}^n}$ and $\partial_{x^1},\dots,\partial_{x^n},\partial_{v^1},\dots,\partial_{v^n}$ respectively.
The matrix representation of~$\alf{s}_{\ast}$ in a block form with respect to the bases
$\partial_{x^1},\dots,\partial_{x^n},\partial_{\alf{w}^1},\dots,\partial_{\alf{w}^n}$ and
$\partial_{x^1},\dots,\partial_{x^n},\partial_{v^1},\dots,\partial_{v^n}$ is
\begin{equation}
\alf{s}_{\ast}
=
\begin{pmatrix}
I & 0 \\
\partial_xv & \partial_{\alf{w}}v
\end{pmatrix}.
\end{equation}
To find coordinate expressions for~$\alf{s}_{\ast}\alf{\partial}^j$ and~$\alf{s}_{\ast}\alf{X}$, the vector fields~$\alf{\partial}^j$ and~$\alf{X}$ need to be expressed in the basis $\partial_{x^1},\dots,\partial_{x^n},\partial_{\alf{w}^1},\dots,\partial_{\alf{w}^n}$.
As long as everything is only evaluated on~$\alf{S}M$, we have~$\alf{X}\alf{u} = \alf{w}^j\alf{\delta}_j\hat{u}$ for any~$\hat{u} \colon TM \setminus 0 \to \R$ such that~$\hat{u}|_{\alf{S}M} = \alf{u}$.
Therefore, in local coordinates and as long as we are careful to only evaluate only~$\alf{S}M$, we have
\begin{align}
\alf{X}
=
\alf{w}^j\partial_{x^j}
-\alf{\Gamma}^k_{\ jl}\alf{w}^j\alf{w}^l\partial_{\alf{w}^k}.
\end{align}
Similarly we get $\alf{\partial}^j = \alf{g}^{jl}\partial_{\alf{w}^l}$.

Coordinate formulas for the push-forwards vector fields can be found by multiplying with~$\alf{s}_{\ast}$.
This time only evaluating on $S_hM$, we have
\begin{equation}
\alf{s}_{\ast}\alf{X}
=
\alf{w}^j\partial_{x^j}
+
\left(
\alf{w}^k\partial_{x^k}v^j
-
\alf{\Gamma}^k_{\ lm}\alf{w}^l\alf{w}^m(\partial_{\alf{w}^k}v^j)
\right)
\partial_{v^j}
\end{equation}
and
\begin{equation}
\alf{s}_{\ast}\alf{\partial}^j
=
\alf{g}^{jl}
(\partial_{\alf{w}^l}v^k)
\partial_{v^k}.
\end{equation}
From these expressions it is clear that convergence in item~\ref{proof-of-pestov1} comes down to three matters.
In the base there are derivatives of components of~$\alf{g}$ up to the second order and derivatives of components of~$\alf{g}^{-1}$ up to the first order.
Again, components of the metric~$\alf{g}$ appear in the coefficients~$\alf{w}^j$.
The behaviour on the limit of all of these matters is controlled by lemma~\ref{lma:approximate-metrics}. We have concluded item~\ref{proof-of-pestov1}.

To prove item~\ref{proof-of-pestov2}, we write the matrix~$\d \alf{s}^{-1}$ in the block form
\begin{equation}
\d \alf{s}^{-1}
=
\begin{pmatrix}
I & 0 \\
\partial_{x}\alf{w} & \partial_{v}\alf{w}
\end{pmatrix}.
\end{equation}
Clearly~$\det\left(\d \alf{s}^{-1}\right) = \det\left(\partial_v\alf{w}\right)$.
Therefore the behaviour as~$\alpha \to \infty$ depends on sums and products of derivatives~$\partial_{v^k}(v\abs{v}^{-1}_{\alpha})$, which comes down to the metric~$\alf{g}$.
By lemma~\ref{lma:approximate-metrics} we have
$
\det\left(\d \alf{s}^{-1}\right)
\to
\det\left(\d s^{-1}\right)
$
in~$L^\infty(S_hM)$.

We have shown both items~\ref{proof-of-pestov1} and~\ref{proof-of-pestov2} and thus we have proved equation~\eqref{eq:alfa-gradv-x}.
This finishes the proof of the lemma\NTR{Switched 'theorem' to 'lemma'.}.
\end{proof}

\begin{lemma}
\label{lma:c11-commutator-formula}
Let~$(M,g)$ be a simple~$C^{1,1}$ manifold.
The commutator formula $[X,\grad{v}] = -\grad{h}$ holds on~$(M,g)$ in the~$H^1$ sense:  For all~$u \in H^1_0(SM)$ and~$V \in C^1(N)$ we have
\begin{equation}
\label{eq:c11-commutator}
\iip{\grad{h}u}{V}_{L^2(N)}
=
\iip{\grad{v}u}{XV}_{L^2(N)}
-
\iip{Xu}{\dive{v}V}_{L^2(SM)}.
\end{equation}
\end{lemma}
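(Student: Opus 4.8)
The plan is to establish~\eqref{eq:c11-commutator} by the same smooth-approximation scheme that produced the Pestov identity in the proof of lemma~\ref{lma:c11-pestov}, relying on the smooth commutator formula of lemma~\ref{lma:cinfty-commutator} and on the crucial simplification that~\eqref{eq:c11-commutator} contains only first-order operators. Fix a smooth reference metric~$h$ on~$M$ and let~$\left(\alf{g}\right)_{\alpha \in \N}$ be the sequence of smooth metrics approximating~$g$ furnished by lemma~\ref{lma:approximate-metrics}, together with the radial diffeomorphisms~$\alf{r}\colon SM \to \alf{S}M$ and~$\alf{s}\colon \alf{S}M \to S_hM$. Given~$u \in H^1_0(SM)$ and~$V \in C^1(N)$, I transfer both to the smooth bundle: set~$\alf{u} := u \circ \alf{r}$, which lies in~$H^1_0(\alf{S}M)$ by remark~\ref{rem:regularity-preserved}, and build~$\alf{V} \in C^1(\alf{N})$ by viewing~$V$ as a section of~$\pi^{\ast}TM$, pulling it back along~$\alf{r}$, and applying the~$\alf{g}$-orthogonal projection onto the fibers of~$\alf{N}$ so that the constraint~$\ip{\alf{V}}{\alf{w}}_{\alf{g}} = 0$ holds.

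For each~$\alpha$, lemma~\ref{lma:cinfty-commutator} applied to~$(M,\alf{g})$ with the data~$\alf{u}$ and~$\alf{V}$ gives
\begin{equation}
\iip{\alfGrad{h}\alf{u}}{\alf{V}}_{L^2(\alf{N})}
=
\iip{\alfGrad{v}\alf{u}}{\alf{X}\alf{V}}_{L^2(\alf{N})}
-
\iip{\alf{X}\alf{u}}{\alfDive{v}\alf{V}}_{L^2(\alf{S}M)}.
\end{equation}
I then push each of the three inner products forward to the common reference bundle~$S_hM$ along~$\alf{s}$, exactly as in the proof of lemma~\ref{lma:c11-pestov}, obtaining integrals over~$S_hM$ whose integrands are assembled from the metric coefficients, their first derivatives, the fixed~$u$ and~$V$, and the Jacobian factor~$\abs{\det(\d\alf{s}^{-1})}$. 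Letting~$\alpha \to \infty$, lemma~\ref{lma:approximate-metrics} shows that every coefficient converges uniformly: the inverse metric components~$\alf{g}^{ij} \to g^{ij}$ and the Christoffel symbols~$\alf{\Gamma}^i_{\ jk} \to \Gamma^i_{\ jk}$ in~$L^\infty_h(M)$, while the Jacobian converges in~$L^\infty(S_hM)$. Combined with the fixed~$H^1$ regularity of~$u$ and the fixed~$C^1$ regularity of~$V$, this yields, after the push-forward, the~$L^2(S_hM)$ convergence of each transformed operator action (those coming from~$\alf{X}\alf{u}$,~$\alfGrad{v}\alf{u}$,~$\alfGrad{h}\alf{u}$,~$\alf{X}\alf{V}$, and~$\alfDive{v}\alf{V}$) to its counterpart for~$g$, so passing to the limit in each inner product produces~\eqref{eq:c11-commutator}.

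It is worth stressing why this argument is genuinely lighter than the Pestov case: identity~\eqref{eq:c11-commutator} involves no second derivatives of the metric and no curvature term, so the merely~$L^2$ convergence of~$\partial^2\alf{g}$ and the~$L^1$ convergence of~$\alf{R}$ are never invoked, and every relevant coefficient converges in~$L^\infty$. This is precisely why~$u \in H^1_0(SM)$ and~$V \in C^1(N)$ suffice and no Lipschitz hypothesis on~$V$ is needed. I expect the only genuine difficulty to be bookkeeping: constructing the approximating sections~$\alf{V}$ so that the orthogonality constraint on~$\alf{N}$ is respected, and checking that the projection-and-pullback construction delivers~$\alf{X}\alf{V} \to XV$ and~$\alfDive{v}\alf{V} \to \dive{v}V$ with the correct limits once transferred to~$S_hM$. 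With the transfer of sections between~$N$ and~$\alf{N}$ set up carefully, the passage to the limit is a routine repetition of the estimates already carried out for lemma~\ref{lma:c11-pestov}.
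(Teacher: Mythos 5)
Your proposal follows essentially the same route as the paper: approximate~$g$ by the smooth metrics of lemma~\ref{lma:approximate-metrics}, apply the smooth commutator formula of lemma~\ref{lma:cinfty-commutator} on each~$\alf{S}M$, push everything forward to the common reference bundle~$S_hM$, and pass to the limit using only the~$L^\infty$ convergence of~$\alf{g}$,~$\alf{g}^{-1}$ and~$\alf{\Gamma}$, exactly as the paper does by reusing the estimates from lemma~\ref{lma:c11-pestov}. Your extra care in projecting~$V\circ\alf{r}$ onto the fibers of~$\alf{N}$ to restore~$\alf{g}$-orthogonality is a legitimate refinement of a detail the paper elides by simply writing~$\alf{V}:=V\circ\alf{r}$, and it is harmless in the limit since the projections converge to the identity.
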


\begin{proof}
Let~$h$ be a smooth reference metric on~$M$ and choose a sequence~$\left(\alf{g}\right)_{\alpha \in \N}$ of smooth metrics approximating~$g$ as in lemma~\ref{lma:approximate-metrics}.
For each~$\alpha \in \N$ denote~$\alf{u} \coloneqq u \circ \alf{r}$ and~$\alf{V} \coloneqq V \circ \alf{r}$.
Then by remark~\ref{rem:regularity-preserved} we have~$\alf{u} \in H^1_0(\alf{S}M)$ and~$\alf{V} \in C^1(\alf{N})$.
We apply lemma~\ref{lma:cinfty-commutator} to~$\alf{u}$ and~$\alf{V}$ to get
\begin{equation}
\label{eq:alpha-comm}
\iip{\alfGrad{h}\alf{u}}{\alf{V}}_{L^2(\alf{N})}
=
\iip{\alfGrad{v}\alf{u}}{\alf{X}\alf{V}}_{L^2(\alf{N})}
-
\iip{\alf{X}\alf{u}}{\alfDive{v}\alf{V}}_{L^2(\alf{S}M)}.
\end{equation}
Letting~$\alpha \to \infty$ in equation~\eqref{eq:alpha-comm} proves the claimed identity~\eqref{eq:c11-commutator}, after we have shown that
\begin{align}
\label{eq:proof-of-comm1}
\lim_{\alpha \to \infty}
\iip{\alfGrad{h}\alf{u}}{\alf{V}}_{L^2(\alf{N})}
&=
\iip{\grad{h}u}{V}_{L^2(N)},
\\
\label{eq:proof-of-comm2}
\lim_{\alpha \to \infty}
\iip{\alfGrad{v}\alf{u}}{\alf{X}\alf{V}}_{L^2(\alf{N})}
&=
\iip{\grad{v}u}{XV}_{L^2(N)}
\end{align}
and
\begin{equation}
\label{eq:proof-of-comm3}
\lim_{\alpha \to \infty}
\iip{\alf{X}\alf{u}}{\alfDive{v}\alf{V}}_{L^2(\alf{S}M)}
=
\iip{Xu}{\dive{v}V}_{L^2(SM)}.
\end{equation}
All formulas~\eqref{eq:proof-of-comm1},~\eqref{eq:proof-of-comm2} and~\eqref{eq:proof-of-comm3} can be shown by arguments analogous to those used in proving the formula~\eqref{eq:alfa-gradv-x} in the proof of lemma~\ref{lma:c11-pestov}.
Thus we omit the details.
Coordinate formulas needed to complete the proofs of the formulas are given in appendix~\ref{app:coord-forms-l2-norms}.
\end{proof}

\subsection{Regularity of the integral function}

Let~$(M,g)$ be a simple~$C^{1,1}$ manifold.
In this section we will prove lemma~\ref{lemma:regularity-of-integral-functions} concerning regularity properties of the integral functions of Lipschitz functions and one-forms.
We prove a Lipschitz property for the geodesic flow in lemma~\ref{lma:flow-lip}.
The Lipschitz property lets us prove that the integral functions are Lipschitz in lemma~\ref{lma:uf-lip-for-bnd-vanishing}.
To prove~$H^1(N,X)$ regularity for the vertical gradients of the integral functions in lemma~\ref{lma:gradvuf-h1x}, we use the commutator formula from lemma~\ref{lma:c11-commutator-formula}.

On a compact manifold~$M$ and its unit sphere bundle~$SM$ all reasonable notions of distance are bi-Lipschitz equivalent.
Since in this section~$M$ will be the Euclidean closed ball, we choose Euclidean distances. 

\begin{lemma}
\label{lma:flow-lip}
Let~$(M,g)$ be a simple~$C^{1,1}$ manifold.
For~$z \in SM$ let~$[\tau_-(z),\tau_+(z)]$ be the maximal interval of existence of the geodesic~$\gamma_z$.
The geodesic flow~$\phi_t$ is Lipschitz continuous in~$z \in SM$:
There is a uniform~$L > 0$ so that for all $z,\hat z\in SM$ and $t\in[\tau_-(z),\tau_+(z)]\cap[\tau_-(\hat z),\tau_+(\hat z)]$ we have
\begin{equation}
d_{SM}(\phi_t(z),\phi_t(\hat z)) \le Ld_{SM}(z,\hat z).
\end{equation}
\end{lemma}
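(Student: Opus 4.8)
The plan is to realize the geodesic flow as the solution of a first-order system of ordinary differential equations with Lipschitz right-hand side and then invoke Grönwall's inequality. Since $M$ is the closed unit ball, the tangent bundle trivializes as $TM \cong M \times \R^n$ and we may work in the single global chart of Cartesian coordinates $(x,v)$. In these coordinates the geodesic vector field reads $X = v^j\partial_{x^j} - \Gamma^i_{\ jk}v^jv^k\partial_{v^i}$, so the curve $t \mapsto \phi_t(z) = (x(t),v(t))$ solves
\begin{equation}
\dot x^i = v^i, \qquad \dot v^i = -\Gamma^i_{\ jk}(x)v^jv^k.
\end{equation}
Writing $F$ for the coordinate expression of $X$, this is the autonomous system $\dot z = F(z)$, and since the geodesic flow preserves unit speed its solutions remain on the compact set $SM \subseteq M \times \R^n$.

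The crucial regularity input is that $F$ is Lipschitz on $SM$. Because $g \in C^{1,1} = W^{2,\infty}$, the first derivatives $\partial_l g_{jk}$ are Lipschitz and the inverse $g^{ij}$ is Lipschitz, so the Christoffel symbols $\Gamma^i_{\ jk} = \tfrac12 g^{il}(\partial_j g_{lk} + \partial_k g_{lj} - \partial_l g_{jk})$ are Lipschitz functions of $x$. As $\abs{v}_{g(x)} = 1$ forces $v$ into a fixed Euclidean ball (the metric is comparable to the Euclidean one on the compact $M$), the set $SM$ is bounded, and the map $F(x,v) = (v, -\Gamma(x)(v,v))$, being polynomial in $v$ with Lipschitz coefficients, is Lipschitz on $SM$ with some constant $K$. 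This is exactly the step where the hypothesis $g \in C^{1,1}$ is indispensable, and it is the reason the result cannot be pushed below $C^{1,1}$ on the Hölder scale.

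With $F$ Lipschitz I would fix $z, \hat z \in SM$ and $t$ in the common interval of existence and compare the two solutions through the integral form
\begin{equation}
\phi_t(z) - \phi_t(\hat z) = (z - \hat z) + \int_0^t \big(F(\phi_s(z)) - F(\phi_s(\hat z))\big)\,\d s.
\end{equation}
Taking Euclidean norms and using the Lipschitz bound for $F$ gives, for $t \ge 0$,
\begin{equation}
\abs{\phi_t(z) - \phi_t(\hat z)} \le \abs{z - \hat z} + K\int_0^t \abs{\phi_s(z) - \phi_s(\hat z)}\,\d s,
\end{equation}
and Grönwall's inequality yields $\abs{\phi_t(z) - \phi_t(\hat z)} \le e^{K\abs{t}}\abs{z - \hat z}$, the case $t \le 0$ being symmetric. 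Since the distances on $SM$ are taken to be Euclidean, this is precisely $d_{SM}(\phi_t(z), \phi_t(\hat z)) \le e^{K\abs t}\, d_{SM}(z,\hat z)$.

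It remains to turn $e^{K\abs t}$ into a constant independent of $t$, which requires a uniform bound on the length of the intervals of existence. Condition~\ref{b3} makes $\tau^2$, hence $\tau = \tau_+$, Lipschitz and therefore bounded on the compact set $SM$; applying this to the flip $(x,v)\mapsto(x,-v)$ bounds $\abs{\tau_-}$ as well. Consequently every admissible $t$ satisfies $\abs t \le T$ for a uniform $T > 0$, and the claim follows with $L = e^{KT}$. The only genuinely delicate point is the Lipschitz continuity of $F$ established above; the remainder is the classical Grönwall estimate for flows of Lipschitz vector fields.
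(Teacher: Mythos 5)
Your proof is correct and follows essentially the same route as the paper: realize the flow as an ODE with Lipschitz right-hand side (the Christoffel symbols are Lipschitz because $g\in C^{1,1}$), apply Grönwall's inequality in the Euclidean distance on $SM$, and then convert $e^{K\abs{t}}$ into a uniform constant using the boundedness of $\tau$ guaranteed by simplicity. The only difference is that you spell out the coordinate form of the system and the treatment of $\tau_-$ via the flip $(x,v)\mapsto(x,-v)$ more explicitly than the paper does, which is harmless.
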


\begin{proof}
Let~$z, \hat z \in SM$.
Note that both lifted geodesics~$t \mapsto \phi_t(z)$ and~$t \mapsto \phi_t(\hat z)$ satisfy the equation~$X(\psi(t)) = \dot{\psi}(t)$ on the interval $[\tau_-(z),\tau_+(z)] \cap [\tau_-(\hat z),\tau_+(\hat z)]$.
Since the distance~$d_{SM}$ can be taken to be Euclidean and the Christoffel symbols of the metric~$g \in C^{1,1}(T^2M)$ are Lipschitz continuous, we get by Grönwall's inequality that
\begin{equation}
d_{SM}(\phi_t(z),\phi_t(\hat z))
\le
e^{K\abs{t-0}}d_{SM}(\phi_0(z),\phi_0(\hat z))
=
e^{K\abs{t}}d_{SM}(z,\hat z),
\end{equation}
for some~$K > 0$ independent of~$t$,~$z$ and~$\hat z$.
Since~$M$ is simple~$C^{1,1}$, the function~$\tau$ is uniformly bounded on~$SM$.
Thus we find a constant~$L > 0$ independent of~$z$ and~$\hat z$ such that~$e^{K\abs{t}} \le L$ uniformly for $t\in [\tau_-(z),\tau_+(z)] \cap [\tau_-(\hat z),\tau_+(\hat z)]$, which finishes the proof.
\end{proof}

\begin{lemma}
\label{lma:uf-lip-for-bnd-vanishing}
Let~$(M,g)$ be a simple~$C^{1,1}$ manifold. Let~$f \in \Lip_0(SM)$ and let~$u^f$ be the integral function of~$f$ defined by~\eqref{eq:uf}. Then~$u^f \in \Lip(SM)$.
\NTR{As one referee suggests, it is possible to prove a similar result assuming only $f|_{\partial_0SM}=0$. This, however, is quite complicated and there are additional technical difficulties cause by the low regularity. This is more relevant for tensor tomography, so it will be covered on a follow-up paper on that topic. The proof is too long to fit here reasonably, given the focus on scalar tomography. See also a later footnote.}
\end{lemma}

\begin{proof}
Let~$z,\hat z \in SM$ be so that~$\tau(\hat z) \leq \tau(z)$.
Then by a simple calculation\NTR{Fixed $w\leadsto\hat z$.}
\begin{equation}
\label{eq:uf-lip-estimate}
\begin{split}
\abs{u^f(z) - u^f(\hat z)}
&\le
(\tau(z) - \tau(\hat z))
\sup_{t \in [\tau(\hat z),\tau(z)]}
\abs{f(\phi_t(z))}
\\
&\quad+
\int_0^{\tau(\hat z)}
\abs{f(\phi_t(z)) - f(\phi_t(\hat z))}
\,\d t.
\end{split}
\end{equation}
We will show that both summands on the right-hand side of equation~\eqref{eq:uf-lip-estimate} are bounded by~$Cd_{SM}(z,\hat z)$ for some constant~$C > 0$ independent of~$z$ and~$\hat z$.

First, we treat the second term on the right-hand side of~\eqref{eq:uf-lip-estimate}.
Since by lemma~\ref{lma:flow-lip} the geodesic flow~$\phi_t$ and~$f$ both are Lipschitz, there is a constant~$K > 0$ independent of~$t$,~$z$ and~$\hat z$ so that
\begin{equation}
\abs{f(\phi_t(z)) - f(\phi_t(\hat z))} \le Kd_{SM}(z,\hat z).
\end{equation}
Since the manifold~$M$ is simple~$C^{1,1}$, there is a constant~$L > 0$ independent of~$\hat z$ so that~$\tau(\hat z) \le L$.
It follows that
\begin{equation}
\label{eq:conclusion1}
\int_{0}^{\tau(\hat z)}
\abs{f(\phi_t(z)) - f(\phi_t(\hat z))}
\,\d t
\le
KLd_{SM}(z,\hat z),
\end{equation}
which proves the desired bound for the second term.

Then we turn to the first term on the right-hand side of~\eqref{eq:uf-lip-estimate}.
Since~$f$ is Lipschitz and vanishes on the boundary~$\partial(SM)$, for all~$t \in [\tau(\hat z),\tau(z)]$ we have
\begin{equation}
\label{eq:bnd-distance}
\begin{split}
\abs{f(\phi_t(z))}
&=
\abs{f(\phi_t(z)) - f(\phi_{\tau(z)}(z))}
\\&\le
\Lip(f)d_{SM}(\phi_t(z),\phi_{\tau(z)}(z))
\\&\leq
\Lip(f)(\tau(z)-t)
\\&\leq
\Lip(f)(\tau(z)-\tau(\hat z))
\\&\leq
\Lip(f)(\tau(z)+\tau(\hat z)).
\end{split}
\end{equation}
The function~$\tau^2$ is Lipschitz since the manifold is simple~$C^{1,1}$, and so
\begin{equation}
\label{eq:conclusion2}
\begin{split}
(\tau(z) - \tau(\hat z))
\sup_{t \in [\tau(\hat z),\tau(z)]}
\abs{f(\phi_t(z))}
&\le
\Lip(f)(\tau^2(z) - \tau^2(\hat z))
\\
&\le
\Lip(f)\Lip(\tau^2)d_{SM}(z,\hat z),
\end{split}
\end{equation}
as desired.

Combining estimates~\eqref{eq:uf-lip-estimate},~\eqref{eq:conclusion1} and~\eqref{eq:conclusion2} yields a Lipschitz estimate for the integral function~$u^f$.
\end{proof}

\begin{lemma}
\label{lma:gradvuf-h1x}
Let~$(M,g)$ be a simple~$C^{1,1}$ manifold.
Assume that~$f \in\Lip_0(SM)$ integrates to zero over all maximal geodesics in~$M$.
Then~$\grad{v}u^f \in H^1(N,X)$, where~$u^f$ is the integral function of~$f$ defined by equation~\eqref{eq:uf}.
\end{lemma}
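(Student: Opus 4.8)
The space $H^1(N,X)$ is the graph-norm completion of $C^1(N)$, so I must establish two things: that $\grad{v}u^f \in L^2(N)$, and that $\grad{v}u^f$ admits an $X$-derivative in $L^2(N)$ and in fact lies in that completion. The first is immediate. By lemma~\ref{lma:uf-lip-for-bnd-vanishing} we have $u^f \in \Lip(SM)$, and since $f$ integrates to zero over every maximal geodesic the function $u^f$ vanishes on all of $\partial(SM)$ (on the outflow and tangential parts the travel time is zero, and on the inflow part $u^f$ equals the integral of $f$ over a full maximal geodesic). Hence $u^f \in \Lip_0(SM) \subseteq H^1_0(SM)$ by the pullback argument of remark~\ref{rem:regularity-preserved}, and since $\grad{v} \colon H^1(SM) \to L^2(N)$ is bounded we get $\grad{v}u^f \in L^2(N)$. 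I also record the facts I will use below: $Xu^f = -f$ by the fundamental theorem of calculus along the flow, and $f \in \Lip_0(SM) \subseteq H^1(SM)$, so that both $\grad{v}f$ and $\grad{h}u^f$ lie in $L^2(N)$.

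The heart of the argument is to identify the weak $X$-derivative of $\grad{v}u^f$. I would fix an arbitrary test section $\Phi \in C^1_0(N)$ and apply the commutator formula of lemma~\ref{lma:c11-commutator-formula} with $u = u^f \in H^1_0(SM)$ and $V = \Phi$. After rearranging it reads
\begin{equation*}
\iip{\grad{v}u^f}{X\Phi}_{L^2(N)}
=
\iip{\grad{h}u^f}{\Phi}_{L^2(N)}
+
\iip{Xu^f}{\dive{v}\Phi}_{L^2(SM)}.
\end{equation*}
Substituting $Xu^f = -f$ and integrating by parts in the last term (the boundary term vanishes since $\Phi \in C^1_0(N)$, using that $\dive{v}$ is the negative adjoint of $\grad{v}$) turns the right-hand side into $\iip{\grad{h}u^f + \grad{v}f}{\Phi}_{L^2(N)}$. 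Since $X$ is skew-adjoint against boundary-vanishing sections, this says precisely that $\grad{v}u^f$ has a distributional $X$-derivative
\begin{equation*}
X\grad{v}u^f
=
-\grad{h}u^f - \grad{v}f,
\end{equation*}
which belongs to $L^2(N)$ by the first paragraph.

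It remains to upgrade this weak statement to membership in the completion $H^1(N,X)$, i.e.\ to produce a sequence in $C^1(N)$ converging to $\grad{v}u^f$ in the graph norm. This I would do by Friedrichs mollification in the global coordinates on the ball $M$: mollify $\grad{v}u^f$ componentwise and project the result back onto $N$ using the Lipschitz orthogonal projection determined by $g$. The mollified-and-projected sections lie in $C^1(N)$ and converge to $\grad{v}u^f$ in $L^2(N)$, and—because the coefficients of $X$ (namely the $v^j$ and the Christoffel symbols of the $C^{1,1}$ metric) are Lipschitz—Friedrichs' commutator lemma guarantees that their $X$-derivatives converge in $L^2(N)$ to the distributional derivative computed above. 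Matching the mollified derivative with the weak derivative while controlling the extra commutator produced by the projection onto $N$ is the main technical obstacle; everything else is a direct consequence of the already-established commutator formula. Once this convergence is secured, $\grad{v}u^f$ is exhibited as a graph-norm limit of $C^1(N)$ sections, so $\grad{v}u^f \in H^1(N,X)$ and the proof is complete.
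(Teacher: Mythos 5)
Your argument is correct and follows essentially the same route as the paper: establish $u^f \in \Lip_0(SM) \subseteq H^1_0(SM)$, apply the commutator formula of lemma~\ref{lma:c11-commutator-formula} with test sections in $C^1_0(N)$, integrate by parts using $Xu^f = -f \in H^1(SM)$, and read off $X\grad{v}u^f = \grad{v}Xu^f - \grad{h}u^f \in L^2(N)$. The only divergence is your closing paragraph: the paper stops at the weak-derivative identification and implicitly equates it with membership in the completion $H^1(N,X)$, whereas you flag that identification explicitly and propose a mollification argument for it, though you leave that step as a sketch rather than carrying it out.
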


\begin{proof}
The integral function~$u^f$ is in~$\Lip(SM)$ by lemma~\ref{lma:uf-lip-for-bnd-vanishing} and~$u^f|_{\partial(SM)} = 0$ since~$f$ integrates to zero over all maximal geodesics of~$M$.
Thus by remark~\ref{rem:regularity-preserved}
we have~$u^f \in H^1_0(SM)$.
Then an application of lemma~\ref{lma:c11-commutator-formula} gives
\begin{equation}
\label{eq:commutator-application}
\iip{\grad{v}u^f}{XV}_{L^2(N)}
=
\iip{\grad{h}u^f}{V}_{L^2(N)}
-
\iip{Xu^f}{\dive{v}V}_{L^2(SM)}
\end{equation}
for any~$V \in C^1(N)$. Here~$Xu^f \in H^1(SM)$, since~$Xu^f = -f \in \Lip(SM)$.
As $Xu^f=-f=0$ at~$\partial SM$, for any~$V \in C^1(N)$\NTR{Zero boundary values for $V$ is not required because $u^f$ vanishes there. Updated this sentence.} we can integrate by parts in~\eqref{eq:commutator-application} to get
\begin{equation}
\iip{\grad{v}u^f}{XV}_{L^2(N)}
=
\iip{(\grad{h}-\grad{v}X)u^f}{V}_{L^2(N)}.
\end{equation}
Therefore $X\grad{v}u^f = (\grad{v}X - \grad{h})u^f \in L^2(N)$, which shows that~$\grad{v}u^f \in H^1(N,X)$.
\end{proof}

\begin{lemma}
\label{lma:unique-geods-imply-bnd-determ}
Let~$(M,g)$ be a simple~$C^{1,1}$ manifold. Then for any~$x \in \partial M$ and~$v \in S_x(\partial M)$, there is a sequence of vectors~$v_k \in S_xM$ so that~$\tau(x,v_k) > 0$,~$v_k \to v$ and~$\tau(x,v_k) \to 0$ as~$k \to \infty$.
\end{lemma}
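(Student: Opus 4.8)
The plan is to realize the tangential direction $v$ as a limit of initial directions of short connecting geodesics between $x$ and nearby boundary points, using condition~\ref{b2} to produce these geodesics and the boundedness of the Christoffel symbols (a consequence of $g \in C^{1,1}$) to control their initial velocities. The point of connecting to \emph{boundary} points rather than interior points is that it pins down the exit time of the resulting geodesic.

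Concretely, since $\partial M = S^{n-1}$ is a smooth hypersurface and $v \in S_x(\partial M) \subseteq T_x(\partial M)$, I would first choose a $C^2$ curve $c$ on $\partial M$ with $c(0) = x$ and Cartesian velocity $\dot c(0) = v$, and set $x_k := c(s_k)$ for a sequence $s_k \downarrow 0$. By condition~\ref{b2} there is, for each $k$, a unique geodesic $\gamma_k$ from $x$ to $x_k$ lying in $\intt(M)$; let $\ell_k$ be its $g$-length and $v_k := \dot\gamma_k(0) \in S_xM$ its initial velocity. Because $\gamma_k$ enters the interior, $\tau(x,v_k) > 0$; because $\gamma_k$ stays in $\intt(M)$ on $(0,\ell_k)$ and meets $\partial M$ at time $\ell_k$, in fact $\tau(x,v_k) = \ell_k$. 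The continuous dependence of the length on the endpoints in~\ref{b2} gives $\ell_k \to 0$, hence $\tau(x,v_k) \to 0$. It remains to prove $v_k \to v$.

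For the convergence of directions---the heart of the argument---I would work in the Cartesian coordinates of $M \subseteq \R^n$ and Taylor expand both curves. Writing $\gamma_k(t) = x + tv_k + R_k(t)$, the geodesic equation $\ddot\gamma_k^i = -\Gamma^i_{\ jl}(\gamma_k)\dot\gamma_k^j\dot\gamma_k^l$ together with the uniform boundedness of the Christoffel symbols (this is the only place where $C^{1,1}$ regularity of $g$ enters) and of the Euclidean speed of unit-speed geodesics gives $\abs{R_k(t)} \le Ct^2$ uniformly in $k$. Expanding the boundary curve as $c(s) = x + sv + \rho(s)$ with $\abs{\rho(s)} \le C's^2$ and evaluating the identity $\gamma_k(\ell_k) = x_k = c(s_k)$ yields
\begin{equation}
\ell_k v_k - s_k v = \rho(s_k) - R_k(\ell_k), \qquad \abs{\rho(s_k) - R_k(\ell_k)} \le C's_k^2 + C\ell_k^2.
\end{equation}
Comparing Euclidean norms and using that the $g(x)$- and Euclidean norms are equivalent at $x$, I would first deduce (once $\ell_k\to0$ is known, so that the term $C\ell_k^2$ can be absorbed) that $\ell_k \asymp s_k$, and then, dividing by $\ell_k$, that $v_k - (s_k/\ell_k)v \to 0$. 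Finally, evaluating $g(x)$-norms in this last relation and using $\abs{v_k}_{g(x)} = \abs{v}_{g(x)} = 1$ forces $s_k/\ell_k \to 1$, whence $v_k \to v$.

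The main obstacle is precisely this last step: in low regularity one cannot invoke differentiability of the exponential map to identify the initial direction of a short geodesic with the chord direction. The argument circumvents this by using only the second-order Taylor remainder of the geodesic, which requires nothing more than an $L^\infty$ bound on the Christoffel symbols---exactly the information available under $g \in C^{1,1}$.
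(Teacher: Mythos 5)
Your proposal is correct and follows essentially the same route as the paper: connect $x$ to nearby boundary points along a curve tangent to $v$, use condition~\ref{b2} to obtain the short interior geodesics and the continuity of their lengths to get $\tau(x,v_k)=\ell_k\to 0$. The only difference is that you supply a detailed justification of the step $v_k\to v$ (via the second-order Taylor remainder of the geodesic controlled by the $L^\infty$ bound on the Christoffel symbols), which the paper asserts without elaboration; your argument for that step is sound and is a welcome addition rather than a deviation.
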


\begin{proof}
Let~$x \in \partial M$ and~$v \in S_x(\partial M)$.
Choose a~$C^1$ boundary curve~$\sigma$ defined on an interval~$I$ so that~$\sigma(0) = x$ and~$\dot\sigma(0) = v$.
Choose a sequence~$(x_k)$ of boundary points on~$\sigma(I)$ so that~$x_k \to x$.
For each~$k$ let~$v_k \in S_xM$ be the initial velocity of the unique geodesic~$\gamma_{k}$\NTR{Renamed $\gamma_{xx_k}\leadsto\gamma_k$ in this proof.} joining~$x$ to~$x_k$ in the interior of~$M$ --- the geodesic~$\gamma_{k}$ exists by simplicity.
Then~$\tau(x,v_k) > 0$ for each~$k$.
Since the lengths of the geodesics depend continuously on their end points, we get~$\tau(x,v_k) = l(\gamma_{k}) \to 0$ as~$k \to \infty$.

It remains to verify that~$v_k \to v$.\NTR{Separated the proof of the convergence $v_k\to v$ into a separate paragraph (below).}
The geodesic equation gives
\begin{equation}
\abs{
\ddot{\gamma}_k^i(t)
}
=
\abs{
\sum_{j,l}
\Gamma^i_{\ jl}(\gamma_k(t))
\dot\gamma_k^j(t)\dot\gamma_k^l(t)
}
\leq
n^2
\sup_x\abs{\Gamma^i_{\ jl}(x)}
\sup_t\abs{\dot\gamma_k(t)}^2
,
\end{equation}
where all norms are the Euclidean ones of the global coordinates and the suprema range over all coordinates.
Therefore $\abs{\ddot{\gamma}_k(t)}$ is uniformly bounded for all~$t$ and~$k$,
and so by Taylor approximation in the coordinates
\begin{equation}
x_k
=
\gamma_k(\tau_k)
=
x
+
\tau_k v_k
+
\Order(\tau_k^2)
,
\end{equation}
where the error term is uniform over~$k$.
Therefore (in local coordinates)
\begin{equation}
\begin{split}
v
&=
\dot\sigma(0)
\\&=
\lim_{k\to\infty}
\frac{x_k-x}{\tau_k}
\\&=
\lim_{k\to\infty}
\frac{\tau_k v_k+\Order(\tau_k^2)}{\tau_k}
\\&=
\lim_{k\to\infty}
v_k
\end{split}
\end{equation}
as claimed.
\end{proof}

\begin{lemma}
\label{lma:bnd-determ-for-scalars}
Let~$(M,g)$ be a simple~$C^{1,1}$ manifold.
Suppose that~$f \in \Lip(M)$ integrates to zero over all maximal geodesics of~$M$. Then~$f$ vanishes on the boundary~$\partial M$.
\end{lemma}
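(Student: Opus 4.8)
The plan is to reduce the statement to a pointwise claim: fixing an arbitrary $x \in \partial M$, it suffices to show $f(x) = 0$. The strategy is to feed $x$ into lemma~\ref{lma:unique-geods-imply-bnd-determ} to obtain a sequence of maximal geodesics emanating from $x$ whose travel times shrink to zero, and then to exploit that $f$ averages to zero over each of them. Because a unit-speed geodesic of length $\tau(x,v_k)$ cannot stray farther than $\tau(x,v_k)$ from its starting point, the average of $f$ over such a geodesic converges to $f(x)$ as $\tau(x,v_k) \to 0$; since every such average vanishes, so must $f(x)$.

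In detail, first I would fix $x \in \partial M$ and choose a tangential direction $v \in S_x(\partial M)$, which is possible because $n \ge 2$ guarantees $S_x(\partial M) \ne \emptyset$. Lemma~\ref{lma:unique-geods-imply-bnd-determ} then supplies vectors $v_k \in S_xM$ with $\tau(x,v_k) > 0$, $v_k \to v$ and $\tau(x,v_k) \to 0$. For each $k$ the geodesic $\gamma_{(x,v_k)}$ runs from the boundary point $x$ back to the boundary, so it is a maximal geodesic and the hypothesis on $f$ gives
\begin{equation}
u^f(x,v_k)
=
\int_0^{\tau(x,v_k)} f(\phi_t(x,v_k)) \, \d t
=
0 .
\end{equation}

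Next I would compare this vanishing integral with the constant value $f(x)$. Writing $\tau_k := \tau(x,v_k)$ and using that $f$, viewed on $SM$, depends only on the base point $\pi(\phi_t(x,v_k)) = \gamma_{(x,v_k)}(t)$, I would estimate
\begin{equation}
\tau_k \abs{f(x)}
=
\left\lvert \int_0^{\tau_k} \big( f(x) - f(\gamma_{(x,v_k)}(t)) \big) \, \d t \right\rvert
\le
\int_0^{\tau_k} \Lip(f) \, t \, \d t
=
\frac{\Lip(f)}{2}\, \tau_k^2 ,
\end{equation}
where the middle inequality uses the Lipschitz continuity of $f$ together with the fact that the unit-speed geodesic reaches $\gamma_{(x,v_k)}(t)$ within arc length $t$ of $x$. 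Dividing by $\tau_k > 0$ yields $\abs{f(x)} \le \tfrac12 \Lip(f)\, \tau_k \to 0$, hence $f(x) = 0$. As $x \in \partial M$ was arbitrary, this proves $f|_{\partial M} = 0$.

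The substance of the argument lives entirely in lemma~\ref{lma:unique-geods-imply-bnd-determ}, which produces, near any boundary point and any tangential direction, genuine interior geodesics with arbitrarily short travel times; granting this, the remainder is a one-line continuity estimate. The only point demanding care is the interplay between the shrinking travel time and the unit speed of the geodesic: it is precisely $\tau_k \to 0$ that confines the geodesic to a shrinking neighborhood of $x$, forcing the identically vanishing averages of $f$ to converge to $f(x)$. No regularity of $f$ beyond Lipschitz continuity is used.
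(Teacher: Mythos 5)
Your proof is correct and follows essentially the same route as the paper: both invoke lemma~\ref{lma:unique-geods-imply-bnd-determ} to produce short maximal geodesics from a boundary point $x$ and conclude that the vanishing averages of $f$ over them converge to $f(x)$. Your explicit Lipschitz estimate is a slightly more quantitative version of the paper's limit argument, but the substance is identical.
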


\begin{proof}
Let~$x \in \partial M$ be a boundary point.
Suppose that~$v \in S_x(\partial M)$.
By lemma~\ref{lma:unique-geods-imply-bnd-determ} there is a sequence of tangent vectors~$v_k \in S_xM$ so that~$\tau(x,v_k) > 0$,~$\tau(x,v_k) \to 0$ and~$v_k \to v$ when~$k \to \infty$.
Since integrals of~$f$ over all maximal geodesics vanish, the integral function~$u^f$ of~$f$ vanishes on the boundary~$\partial(SM)$.
As the lengths of the geodesics approach zero we get
\begin{equation}
f(x)
=
\lim_{k \to \infty}
\frac{1}{\tau(x,v_k)}
\int_0^{\tau(x,v_k)}
f(\gamma_{x,v_k}(t))
\,\d t
=
\lim_{k \to \infty}
\frac{1}{\tau(x,v_k)}
u^f(x,v_k)
=
0
\end{equation}
as claimed.
\end{proof}

If $f\in\Lip(SM)$, then the proof above only gives $f|_{\partial_0(SM)}=0$, not $f|_{\partial(SM)}=0$.
This is true also in the smooth case, and this conclusion is optimal for general functions on~$SM$.\NTR{Added sentence.}
If a function on~$SM$ arises from a tensor field, then the natural boundary determination is more involved in low regularity and we shall not discuss it here; cf. remark~\ref{rmk:h=0}.\NTR{Added a sentence and removed an old one. See an earlier comment on tensor boundary determination.}

\begin{proof}[Proof of lemma~\ref{lemma:regularity-of-integral-functions}]
\ref{reg-of-uf}
Let~$f$ be a Lipschitz function on~$M$ that integrates to zero over all maximal geodesics of~$M$.
Define the integral function~$u^f$ of~$f$ as in~\eqref{eq:uf}.
We have~$f \in \Lip_0(M)$ by lemma~\ref{lma:bnd-determ-for-scalars}.
Thus~$u^f \in \Lip_0(SM)$ by lemma~\ref{lma:uf-lip-for-bnd-vanishing}.
We have~$\grad{v}u^f \in H^1_0(N,X)$ by lemma~\ref{lma:gradvuf-h1x} since~$u^f$ vanishes on~$\partial(SM)$. The last claim $Xu^f = -\pi^{\ast}f \in \Lip(SM) \subseteq H^1(SM)$ follows from the fundamental theorem of calculus.

\ref{reg-of-uh}
Let~$h$ be a Lipschitz~$1$-form on~$M$ that integrates to zero over all maximal geodesics of~$M$ and vanishes on the boundary~$\partial M$.
Let~$u^h$ be the integral function of~$h$ defined by~\eqref{eq:uf}.
Then~$u^h \in \Lip_0(SM)$ by lemma~\ref{lma:uf-lip-for-bnd-vanishing}.
We see that~$Xu^f \in H^1(SM)$ and~$\grad{v}u^h \in H^1_0(N,X)$ as in item~\ref{reg-of-uf}.
\end{proof}

\subsection{The integral function in the Pestov identity}

This subsection concludes the proofs of the lemmas required to prove theorem~\ref{thm:c11-injectivity}.
We verify that the integral function of a Lipschitz~$1$-form~$h$ on~$M$ behaves in the same way in the Pestov identity as it does in the smooth case.
Recall that~$(M,g)$ is a simple~$C^{1,1}$ Riemannian manifold and particularly~$g$ is a~$C^{1,1}$ regular Riemannian metric on~$M$.

\begin{proof}[Proof of lemma~\ref{lma:oneform-in-pestov-cancels}]
Let~$h$ be a Lipschitz~$1$-form on~$M$ and denote by~$\tilde h$ the associated function on~$SM$.
We will show that
\begin{equation}
\label{eq:one-forms-cancel}
\norm{\grad{v}\tilde h}_{L^2(N)}^2
=
(n-1)
\norm{\tilde h}_{L^2(SM)}^2.
\end{equation}
The Lipschitz assumption guarantees that the left-hand side of~\eqref{eq:one-forms-cancel} is well defined.
Let~$\omega$ stand for the $(n-1)$-dimensional measure of the unit sphere in~$\R^n$. By~\cite[Lemma 4]{IlmavirtaXRTPR} we have
\begin{equation}
\int_{S_xM}
\abs{\tilde h(x,v)}^2
\,\d S_x
=
\abs{h(x)}^2
\frac{\omega}{n}
\end{equation}
and
\begin{equation}
\int_{S_xM}
\abs{\grad{v}\tilde h(x,v)}^2
\,\d S_x
=
\abs{h(x)}^2
\frac{\omega(n-1)}{n}
\end{equation}
on every fiber~$S_xM$ of the unit sphere bundle. We may integrate over~$x$ just as in~\cite[Lemma 4]{IlmavirtaXRTPR} despite having less regularity, and we find
\begin{equation}
\norm{\grad{v}\tilde h}^2_{L^2(N)}
=
(n-1)
\int_M
\abs{h(x)}^2\frac{\omega}{n}
\,\d V_g
=
(n-1)\norm{\tilde h}^2_{L^2(SM)}
\end{equation}
as claimed.
\end{proof}

\section{Lemmas in smooth geometry}
\label{sec:lemmas-in-smooth-geometry}

This final section contains the proofs of the lemmas used to verify that the two definitions of simplicity (definitions~\ref{def:simple-a} and~\ref{def:simple-b}) agree when the geometry is~$C^\infty$-smooth.
We assume that~$M \subseteq \R^n$ is the closed unit ball and we let~$g$ be a~$C^{\infty}$-smooth Riemannian metric on~$M$.

We denote by~$I_{\gamma}$ the index form along a geodesic~$\gamma$ of~$M$.
Recall that if there are interior conjugate points along~$\gamma$, then~$I_{\gamma}$ is indefinite and if the end points of~$\gamma$ are conjugate to each other along~$\gamma$, then there is a normal vector field~$V \not\equiv 0$ along~$\gamma$ so that~$I_{\gamma}(V) = 0$.
If~$V$ is a normal vector field along~$\gamma$ vanishing at the end points of~$\gamma$, we abbreviate~$I_{\gamma}(V) \coloneqq I_{\gamma}(V,V)$.

\begin{proof}[Proof of lemma~\ref{lma:posit-q-implies-no-conj}]
Let~$(M,g)$ be a simple~$C^{1,1}$ manifold and assume that the Riemannian metric~$g$ is~$C^{\infty}$-smooth.
Let~$\gamma_0 \colon [a,b] \to M$ be a maximal geodesic in~$M$ and let~$V \not\equiv 0$ be a normal vector field along~$\gamma_0$ vanishing at the end points of~$\gamma_0$.
We will show that $I_{\gamma_0}(V) > 0$, proving that there cannot be conjugate points along~$\gamma$ even at its end points.

Let $(\gamma_0(0),\dot\gamma_0(0)) =: z_0 \in \dooin(SM)$ be the initial data of a geodesic~$\gamma_0$ and let~$\tilde\gamma_0$ be the lift to the sphere bundle.
The pullback bundle~$\tilde\gamma_0^{\ast}N$ consists precisely of all normal vector fields along~$\gamma_0$.
Particularly,~$V$ is a section of~$\tilde\gamma_0^{\ast}N$ vanishing at the end points, so by lemma~\ref{lma:extension} (in appendix~\ref{app:extension}) there is a smooth section~$\tilde V$ of~$N$ vanishing on the boundary and satisfying~$\tilde V|_{\tilde\gamma_0} = V$.
We may assume that~$\tilde V$ is supported in a small neighborhood of~$\tilde\gamma_0$.

Choose for each~$k \in \N$ a smooth function~$a_k \colon \dooin(SM) \to \R$ so that~$a_k^2 \to \delta_{z_0}$ in the weak sense and~$\int_{\dooin(SM)}a^2_k\d\mu = 1$, where $\d\mu(x,v) = \ip{\nu(x)}{v}\d\Sigma_g(x,v)$.
Since we are working locally around~$z_0$, it is enough to find such a sequence of functions in Euclidean space and we see that a sequence of square roots of positive standard mollifiers will\NTR{Fixed typo.} suffice.

For each~$k \in \N$ let~$W_k \in C^{\infty}(N)$ be a section such that~$W_k(\phi_t(z)) = a_k(z)\tilde V(\phi_t(z))$ for all~$z \in \dooin(SM)$ and~$t \in [0,\tau(z)]$.
By Santaló's formula (see~\cite[Lemma 3.3.2]{SharafutdinovRTRM}) it follows that as~$k \to \infty$ we have
\begin{equation}
\begin{split}
Q(W_k) 
&=
\int_{z \in \dooin(SM)}
I_{\gamma_z}(W_k|_{\tilde\gamma_z})
\,\d\mu(z)
\\
&=
\int_{z \in \dooin(SM)}
a^2_k(z)I_{\gamma_z}(\tilde V|_{\tilde\gamma_z})
\,\d\mu(z)
\\
&\to
\int_{z \in \dooin(SM)}
\delta_{z_0}(z)I_{\gamma_{z}}(\tilde V|_{\tilde\gamma_z})
\,\d\mu(z)
\\
&=
I_{\gamma_{0}}(\tilde V|_{\tilde\gamma_{0}})
=
I_{\gamma_{0}}(V).
\end{split}
\end{equation}
Here we have written the distribution~$\delta_{z_0}$ as a function on~$SM$ to simplify notation.
Similarly as~$k \to \infty$ we get
\begin{equation}
\begin{split}
\norm{W_k}^2_{L^2(N)}
&=
\int_{z \in \dooin(SM)}
\int_0^{\tau(z)}
\abs{W_k|_{\tilde\gamma_{z}}}^2
\,\d t
\,\d\mu(z)
\\
&=
\int_{z \in \dooin(SM)}
a^2_k(z)
\left(
\int_0^{\tau(z)}
\abs{\tilde V|_{\tilde\gamma_{z}}}^2
\,\d t
\right)
\,\d\mu(z)
\\
&\to
\int_{z \in \dooin(SM)}
\delta_{z_0}(z)
\left(
\int_0^{\tau(z)}
\abs{\tilde V|_{\tilde\gamma_{z}}}^2
\,\d t
\right)
\,\d\mu(z)
\\
&=
\int_0^{\tau(z_0)}
\abs{\tilde V|_{\tilde\gamma_{0}}}^2
\,\d t
=
\int_0^{\tau(z_0)}
\abs{V}^2
\,\d t.
\end{split}
\end{equation}
By $C^{1,1}$ simplicity of $(M,g)$ and zero boundary values of~$W_k$ there is $\varepsilon > 0$ so that $Q(W_k) \ge \varepsilon\norm{W_k}^2_{L^2(N)}$ for all~$k$. We conclude that
\begin{equation}
I_{\gamma_{0}}(V)
\ge
\varepsilon
\int_0^{\tau(z_0)}
\abs{V}^2
\,\d t
> 0,
\end{equation}
which proves that there cannot be conjugate points along~$\gamma_0$ even at its end points.
\end{proof}

\begin{proof}[Proof of lemma~\ref{lma:conv-bnd-equiv-tau2-lip}]
Let~$(M,g)$ be a compact smooth Riemannian manifold with a smooth boundary.
We assume that the Riemannian metric~$g$ is~$C^\infty$-smooth.

First, we will prove that strict convexity implies Lipschitz continuity of~$\tau^2$. As the boundary is strictly convex, all geodesics starting in the interior~$\sisus(SM)$ meet the boundary transversally. The implicit function theorem implies that~$\tau$ is smooth in~$\sisus(SM)$. As~$\tau \colon SM \to \R$ is continuous on all of~$SM$, it suffices to show that the gradient of~$\tau^2$ (in Sasaki or any other Riemannian metric on~$SM$) is uniformly bounded in the interior.

Let~$z \in SM$ be an interior point and let~$s \mapsto z_s$ be a smooth curve of interior points, where~$s \in (-\varepsilon,\varepsilon)$ and~$z_0 = z$. Choose~$s \mapsto z_s$ to have unit speed with respect to the Sasaki metric related to the $C^\infty$-smooth metric~$g$. The implicit function theorem gives an explicit formula for the differential~$\d \tau$ of~$\tau$. Applying the implicit function theorem to~$\rho(\gamma_{z_s}(t))$ yields
\begin{align}
\label{eq:dtau-ds}
\frac{\d}{\d s}\tau(z_s)
&=
-\frac{
\ip{
\frac{\d}{\d s}
\gamma_{z_s}(t)
}{
\nu(\gamma_{z_s}(t))}
}{
\ip{
\dot\gamma_{z_s}(t)
}{
\nu(\gamma_{z_s}(t))
}}
\bigg|_{t = \tau(z_s)}
,
\end{align}
where~$\rho$ is a boundary defining function.
To prove that~$\d(\tau^2) = 2\tau\d\tau$ is uniformly bounded in the interior, we will show that
\begin{equation}
\label{eq:tau-d-tau}
\tau(z_s)\frac{\d}{\d s}\tau(z_s)
\end{equation}
is bounded by some absolute constant near~$s = 0$.
Boundedness of~\eqref{eq:tau-d-tau} will follow after we have shown that\footnote{This estimate follows from \cite[Lemma 4.1.2]{SharafutdinovIGTF}, but we reprove it here. Our method of proof is different and may be of interest to some readers.}\NTR{Added this footnote. We find the proof in Sharafutdinov's book quite different, so we prefer to include our proof here instead of replacing it with a citation.}
\begin{equation}
\label{eq:tau-less-than-ip}
\tau(z)
\lesssim
\abs{
\ip{
\dot\gamma_{z}(\tau(z))
}{
\nu(\gamma_{z}(\tau(z)))
}}
\end{equation}
for all~$z \in \sisus(SM)$, since by growth estimates for Jacobi fields and $\abs{\dot z_0}=1$ we have
\begin{equation}
\abs{
\ip{
\frac{\d}{\d s}\gamma_{z_s}(\tau(z_s))
}{
\nu(\gamma_{z_s}(\tau(z_s)))
}}
\leq
C,
\end{equation}
where~$C$ is a constant depending only on curvature bounds and diameter.
Since the right-hand side of~\eqref{eq:tau-less-than-ip} is constant along the geodesic~$\gamma_z$, it is enough to prove boundedness for~$z \in \dooin(SM)$.

Outside any neighbourhood of the compact set~$\partial_0(SM)$, the right-hand side of~\eqref{eq:tau-less-than-ip} is uniformly bounded from below by a positive constant and~$\tau(z)$ is also uniformly bounded from above.
Thus if we can prove that there is a neighbourhood of the set~$\partial_0(SM)$ where~\eqref{eq:tau-less-than-ip} holds, it will hold everywhere on~$\partial(SM)$.

Take any~$x \in \partial M$ and an inward pointing vector~$v \in S_xM$.
Let
\begin{equation}
\hat x \coloneqq \gamma_{x,v}(\tau(x,v))
\quad
\text{and}
\quad
\hat v \coloneqq -\dot\gamma_{x,v}(\tau(x,v)).
\end{equation}
Let~$\nu$ be the inward unit normal vector at the boundary.
We decompose the vector~$\hat v$ 
as $\hat v^\perp\nu+\hat v^\parallel$, where $\hat v^\perp>0$ and $\hat v^\parallel$ is parallel to the boundary.\NTR{Completely rewrote this sentence for clarity.}
It follows from~\cite[Lemma 12]{IlmavirtaBRTBR} that as~$\hat v^\perp \to 0$, we have
\begin{equation}
\tau(\hat x,\hat v)
=
2v^{\perp}S(\hat v^{\parallel},\hat v^{\parallel})^{-1}
+
\Order((\hat v^{\perp})^2),
\end{equation}
where~$S$ is the second fundamental form of~$\partial M$ and the error term is locally uniform. As the boundary is strictly convex, the second fundamental form is bounded uniformly from below by~$c > 0$. Thus as~$\hat v^\perp \to 0$ we get
\begin{equation}
\label{eq:tau-less-than-perp}
\tau(\hat x,\hat v)
\le 3c^{-1}\hat v^{\perp}
=
3c^{-1}\ip{\nu(\hat x)}{\hat v}.
\end{equation}
Therefore, since~$\tau(x,v) = \tau(\hat x,\hat v)$, as~$v^\perp \to 0$ we get
\begin{equation}
\tau(x,v)
=
\tau(\hat x,\hat v)
\lesssim
\abs{\ip{\nu(\hat x)}{\hat v}}
=
\abs{\ip{\nu(\gamma_{x,v}(\tau(x,v)))}{\dot\gamma_{x,v}(\tau(x,v))}}.
\end{equation}
This shows that~\eqref{eq:tau-less-than-ip} holds in a neighbourhood of the tangential point~$(x,v^\parallel)$. Thus estimate~\eqref{eq:tau-less-than-ip} holds in a neighbourhood of~$\partial_0(SM)$.

Next we turn to the opposite statement.
We assume that~$\tau^2$ is Lipschitz.
If the boundary were not to be strictly convex everywhere, there would be a~$v \in S_x(\partial M)$ so that~$S_x(v,v) \le 0$.
\NTR{Reformulated the proof slightly so that the Lipschitz property is assumed, not so that we try to prove its failure. This paragraph has been restructured and cut off from the following ones.}

\NTR{This paragraph is new.}
As~$\tau^2$ is Lipschitz, the function~$\tau$ itself is H\"older-continuous.
Because the continuous function~$\tau$ vanishes on ${\dooout SM\setminus\partial_0(SM)}$ (the geodesics stop immediately),
we have
\begin{equation}
\label{eq:tau-tangent=0}
\tau|_{\partial_0(SM)}
=
0
\end{equation}
as well.



We use boundary normal coordinates near the base point~$x \in \partial M$.
We construct a family~$(\gamma_h)_{h \in [0,1]}$ of geodesics as follows.
Parallel translate the vector~$v$ for time~$h$ along the geodesic starting normally inwards from~$x$.
Call this vector $v_h\in T_{x_h}M$.
Let~$\gamma_h$ be the geodesic with the initial data $\dot\gamma_h(0)=v_h$.
The geodesic~$\gamma_0$ (with initial direction $v_0=v$ at $x_0=x$) starts at the boundary and may, depending on the convexity of the boundary, be only defined at $t=0$.

As in~\cite[Eq.~(2)]{IlmavirtaBRTBR} we extend the second fundamental form in the boundary normal coordinates near $x$.
Denote~$S_h(t) \coloneqq S_{\gamma_h(t)}(\dot\gamma_h(t),\dot\gamma_h(t))$.
Since~$S_x(v,v) \le 0$ we have
\begin{equation}
S_h(t)
=
S_0(0) + \Order(h) + \Order(\abs{t})
\le
C(h + \abs{t}),
\end{equation}
for some~$C > 0$ when~$h$ and~$\abs{t}$ are small.
If~$z_h(t)$ is the distance from~$\gamma_h(t)$ to the boundary, we have~$z_h(0) = h$ and~$\dot z_h(0) = 0$.
By writing the geodesic equation in boundary normal coordinates (as in~\cite[Eq.~(8)]{IlmavirtaBRTBR}) we find that
\begin{equation}
\label{eq:dist-to-bnd-ode}
\ddot z_h(t)
=
-S_h(t)
\ge
-C(h + \abs{t}).
\end{equation}

The total length~$\tau_h$ of the geodesic~$\gamma_h$ can be divided into forward and backward parts, denoted respectively by~$\tau^+_h$ and~$\tau^-_h$.
We want to find estimates for~$\tau^+_h$ and~$\tau^-_h$ from below.

Let us first consider the case of positive time,~$t > 0$.
Integrating the estimate~\eqref{eq:dist-to-bnd-ode} leads to
\begin{equation}
z_h(t)
=
h
+
\int_0^t
\int_0^s
\ddot z_h(r)
\der r
\der s
\geq
h-\frac{C}{2}ht^2 - \frac{C}{6}t^3
=:
\hat z_h(t)
\end{equation}
for all $t>0$.
If we choose
$
A
\coloneqq
\min
\left(
\sqrt{\frac{2}{3C}},
\sqrt[3]{\frac{2}{C}}
\right)
$ and $
\hat\tau^+_h
\coloneqq
Ah^{1/3}
$
,
then for all~$t \in [0,\hat\tau^+_h]$ we have
\begin{equation}
\hat z_h(t)
\ge
h\left[
1 - \frac12Ch^{2/3}A^2 - \frac16CA^3
\right]
\\
\ge
\frac h3.
\end{equation}
Therefore~$z_h(t) \ge \hat z_h(t) > 0$ for~$t \in [0,\hat\tau^+_h]$.
This shows that~$\tau^+_h \ge \hat\tau^+_h$.

The case of negative time can be reduced to previous case by substituting~$t = -s$,~$s > 0$ and similarly we get~$\tau^-_h \ge Ah^{1/3}$.
\NTR{Rephrased the proof from this point on.}
Equation~\eqref{eq:tau-tangent=0} implies $\tau_0=0$, and this together with the Lipschitz continuity of~$\tau^2$ implies that there is $B>0$ so that~$\tau_h^2 \le Bh$.
As~$0 < h \ll 1$, this gives us
\begin{equation}
Bh
\ge
\tau_h^2
=
(\tau^+_h
+
\tau^-_h)^2
\ge
4A^2h^{2/3},
\end{equation}
which is impossible for small~$h$.
This is a contradiction so the boundary has to be strictly convex.
\end{proof}

\appendix

\section{Coordinate formulas and norms}
\label{app:coord-forms-l2-norms}

We have collected here the remaining formulas from proofs of lemmas~\ref{lma:c11-pestov} and~\ref{lma:c11-commutator-formula}.
In the context of the proof of lemma~\ref{lma:c11-pestov} following formulas hold.
The~$Q_{\alpha}$-term in identity~\eqref{eq:pestov-alfa} is
\begin{equation}
Q_{\alpha}\left(\alfGrad{v}\alf{u}\right)
=
\norm{\alf{X}\alfGrad{v}\alf{u}}_{L^2(\alf{N})}
-
\iip{\alf{R}\alfGrad{v}\alf{u}}{\alfGrad{v}\alf{u}}_{L^2(\alf{N})}.
\end{equation}
For~$L^2$ quantities in identity~\eqref{eq:pestov-alfa} we have
\begin{equation}
\label{eq:appendix1}
\begin{split}
\norm{\alf{X}\alfGrad{v}\alf{u}}^2_{L^2(\alf{N})}
&=
\int_{S_hM}
\alf{g}_{ij}
\left(
\alf{w}^k
\left(
\alf{s}_{\ast}\alf{\delta}_k
\right)
\left(
(\alf{s}_{\ast}\alf{\partial}^i)
\tilde u
\right)
+
\alf{\Gamma}^i_{\ lk}\alf{w}^l(\alf{s}_{\ast}\alf{\partial}^k)
\tilde u
\right)
\\
&\hspace{4em}\times
\left(
\alf{w}^k
\left(
\alf{s}_{\ast}\alf{\delta}_k
\right)
\left(
(\alf{s}_{\ast}\alf{\partial}^j)
\tilde u
\right)
+
\alf{\Gamma}^j_{\ lk}\alf{w}^l(\alf{s}_{\ast}\alf{\partial}^k)
\tilde u
\right)
\\
&\hspace{4em}\times
\abs{
\det
\left(
\d \alf{s}^{-1}
\right)
}
\,\d \Sigma_h
\end{split}
\end{equation}
and
\begin{equation}
\label{eq:appendix2}
\norm{\alf{X}\alf{u}}^2_{L^2(\alf{S}M)}
=
\int_{S_hM}
\abs{
\left(
\alf{s}_{\ast}\alf{X}
\right)
\tilde u}^2
\abs{
\det
\left(
\d \alf{s}^{-1}
\right)
}
\,\d \Sigma_h
\end{equation}
and
\begin{equation}
\label{eq:appendix3}
\begin{split}
\iip{\alf{R}\alfGrad{v}\alf{u}}{\alfGrad{v}\alf{u}}_{L^2(\alf{N})}
&=
\int_{S_hM}
\alf{g}_{ij}
\left(
\alf{R}^i_{\ jkl}
\left(
(\alf{s}_{\ast}\alf{\partial}^j)\tilde u
\right)
\alf{w}^k\alf{w}^l
\right)
\\
&\hspace{4em}\times
\left(
(\alf{s}_{\ast}\alf{\partial}^j)\tilde u
\right)
\abs{
\det
\left(
\d \alf{s}^{-1}
\right)
}
\,\d\Sigma_h.
\end{split}
\end{equation}
For the vector fields~$\alf{s}_{\ast}\alf{\delta}_k$,~$\alf{s}_{\ast}\alf{X}$ and~$\alf{s}_{\ast}\alf{\partial}^j$ appearing in formulas~\eqref{eq:appendix1},~\eqref{eq:appendix2} and~\eqref{eq:appendix3} we have coordinate formulas
\begin{align}
\alf{s}_{\ast}\alf{\delta}_k
&=
\partial_{x^k}
+
(\partial_{x^k}v^j)\partial_{v^j}
-
\alf{\Gamma}^i_{\ kj}\alf{w}^j(\partial_{\alf{w}^i}v^l)
\partial_{v^l},
\\
\alf{s}_{\ast}\alf{X}
&=
\alf{w}^j\partial_{x^j}
+
\left(
\alf{w}^k\partial_{x^k}v^j
-
\alf{\Gamma}^k_{\ lm}\alf{w}^l\alf{w}^m(\partial_{\alf{w}^k}v^j)
\right)
\partial_{v^j}
\quad\text{and}
\\
\alf{s}_{\ast}\alf{\partial}^j
&=
\alf{g}^{jl}
(\partial_{\alf{w}^l}v^k)
\partial_{v^k}.
\end{align}

In the context of the proof of lemma~\ref{lma:c11-commutator-formula} the following formulas hold.
For the~$L^2$ inner products in equation~\eqref{eq:alpha-comm} we have
\begin{equation}
\label{eq:appendix4}
\iip{\alfGrad{h}\alf{u}}{\alf{V}}_{L^2(\alf{N})}
=
\int_{S_hM}
\alf{g}_{ij}
\left(
(\alf{s}_{\ast}\alf{\delta}^i)\tilde u
+
\alf{w}^i(\alf{s}_{\ast}\alf{X})\tilde u
\right)
\tilde V^j
\abs{
\det
\left(
\d \alf{s}^{-1}
\right)
}
\,\d\Sigma_h
\end{equation}
and
\begin{equation}
\label{eq:appendix5}
\begin{split}
\iip{\alfGrad{v}\alf{u}}{\alf{X}\alf{V}}_{L^2(\alf{N})}
&=
\int_{S_hM}
\alf{g}_{ij}
\left(
(\alf{s}_{\ast}\alf{\partial}^i)\tilde u
\right)
\left(
(\alf{s}_{\ast}\alf{X})\tilde V^j
+
\alf{\Gamma}^j_{\ lk}\alf{w}^l\tilde V^k
\right)
\\
&\hspace{4em}\times
\abs{
\det
\left(
\d \alf{s}^{-1}
\right)
}
\,\d\Sigma_h
\end{split}
\end{equation}
and
\begin{equation}
\label{eq:appendix6}
\iip{\alf{X}\alf{u}}{\alfDive{v}\alf{V}}_{L^2(\alf{S}M)}
=
\int_{S_hM}
\left(
(\alf{s}_{\ast}\alf{X})\tilde u
\right)
\left(
(\alf{s}_{\ast}\alf{\partial}_j)\tilde V^j
\right)
\abs{
\det
\left(
\d \alf{s}^{-1}
\right)
}
\,\d\Sigma_h.
\end{equation}
New vector fields~$\alf{s}_{\ast}\alf{\partial}_j$ and~$\alf{s}_{\ast}\alf{\delta}^k$ appear in equations~\eqref{eq:appendix4},~\eqref{eq:appendix5} and~\eqref{eq:appendix6}. For them we have the coordinate formulas
\begin{equation}
\alf{s}_{\ast}\alf{\partial}_j
=
(\partial_{\alf{w}^j}v^k)\partial_{v^k},
\end{equation}
and
\begin{equation}
\alf{s}_{\ast}\alf{\delta}^k
=
\alf{g}^{kl}\partial_{x^l}
+
(\alf{g}^{kl}(\partial_{x^l}v^j))\partial_{v^j}
-
\alf{g}^{kl}\alf{\Gamma}^i_{\ lm}\alf{w}^m(\partial_{\alf{w}^i}v^j)\partial_{v^j}.
\end{equation}

\section{Smooth extension from a curve}
\label{app:extension}

This appendix is devoted to the proof of the following lemma.
We will comment on some of the definitions and give examples after the statement.
In this appendix everything is smooth and all manifolds and bundles have finite dimension.

\begin{lemma}
\label{lma:extension}
Let~$M$ be a smooth manifold with boundary and $\pi\colon B\to M$ a bundle over it whose fiber is a closed manifold.
Let $\Pi\colon E\to B$ be a vector bundle over~$B$.

Let $\sigma\colon[a,b]\to B$ be a smooth curve\NTR{This need not be a section of any bundle. This can be any smooth curve satisfying the given conditions.} without self-intersections so that the end points~$\pi(\sigma(a))$ and~$\pi(\sigma(b))$ are on~$\partial M$ and $\pi(\sigma(t))\in\sisus(M)$ for all $t\in(a,b)$.
Suppose the exit directions~$\dot\sigma(a)$ and~$\dot\sigma(b)$ are not tangent to the boundary $\partial B \coloneqq \pi^{-1}(\partial M)$. 

Let~$V$ be a smooth section of the pullback bundle~$\sigma^*E$ so that $V(a)=V(b)=0$.
Then there is a smooth section~$W$ of~$E$ so that $W|_{\partial B}=0$\NTR{Fixed typo in the restriction.} and $W(\sigma(t))=V(t)$ for all $t\in [a,b]$.
\end{lemma}

The fiber of the bundle~$B$ is a smooth and compact manifold of any finite dimension, including zero.
The result is valid in the trivial case where the fiber is a singleton and $B=M$.
If~$E$ is the trivial line bundle $B\times\R$, then sections of it are merely scalar functions $B\to\R$.
Therefore the lemma covers extensions of scalar functions from smooth curves~$\gamma$ on~$M$ but also much more.
The result will only be applied in the case $B=SM$ and $E=N$, but we record it in more generality as it adds no cost.

As $\sigma\colon[a,b]\to B$ is an injective smooth map, a section of the pullback bundle~$\sigma^*E$ is simply a smooth map $W\colon[a,b]\to E$ so that $\Pi(W(t))=\sigma(t)$ for all $t\in[a,b]$.

\begin{proof}[Proof of lemma~\ref{lma:extension}]
Denote the projected curve by $\gamma\coloneqq\pi\circ\sigma\colon[a,b]\to M$.
The assumption that~$\dot\sigma(a)$ and~$\dot\sigma(b)$ are not tangential to~$\partial B$ implies that the\NTR{Fixed typo.} end directions~$\dot\gamma(a)$ and~$\dot\gamma(b)$ on the base are not tangential to~$\partial M$.

The point $x=\gamma(a)$ has a neighborhood $\omega_1\subset M$\NTR{Fixed typo. $\omega$s live on $M$ and $\Omega$s on $B$.} where we may choose local coordinates $\phi\colon\omega_1\to\R^n$ so that $\phi(\partial M\cap\omega_1)=\{x_n=0\}$ and for all interior points $y\in M\setminus\partial M$ we have $\phi_n(y)>0$.
In these coordinates the initial direction satisfies $\dot\gamma_n(a)>0$, and so the map
\begin{equation}
\theta
\colon
[a,a+\eps)
\ni
t
\mapsto
\gamma_n(t)
\in
[0,h)
\end{equation}
is a diffeomorphism for some choice of $\eps,h>0$.

We may shrink~$\omega_1$ so that $\phi(\omega_1)\subset\R^{n-1}\times[0,h)$ and the bundle $B$ is locally trivial:
$
B
\supset
\pi^{-1}(\omega_1)
\approx
\omega_1\times F
$%
,
where $F$ is a closed manifold (the typical fiber of~$B$).
Denote $y=\sigma(a)\in B_x=F$.
There is a neighborhood $U\ni y$ in~$F$ so that the bundle~$E$ is trivial over $\omega_1\times U\eqqcolon\Omega_1\subset\pi^{-1}(\omega_1)\subset B$ (with the product in the sense of the local trivialization of~$B$)\NTR{Redescribed $\Omega_1$ more clearly.} in the sense that $\Pi^{-1}(\Omega_1)\approx\Omega_1\times\R^K$, where $K\in\N$ is the dimension of the fiber of~$E$.
In these coordinates the section~$V$ of~$\sigma^*E$ may be written as a smooth function $[a,b]\to\R^K$\NTR{Fixed typo in the interval.}, and we denote the component functions as $V_k\colon[a,b]\to\R$\NTR{Fixed typo in the interval.}.
By the non-intersecting property of~$\sigma$ we may assume the neighborhoods $\omega_1\subset M$ and $\Omega_1\subset B$ to be so small that the curve~$\sigma$ does not return to~$\Omega_1$ after leaving it.

We define a function $W_1\colon\Omega_1\to\R^K$ by letting its components be
\begin{equation}
\label{eq:W-ext-def}
W_1(z)_k
=
V_k(\theta^{-1}(\pi(z)_n))
.
\end{equation}
This defines a section~$W_1$ of the bundle~$E$ in a neighborhood of the point $(x,y)\in B$.
By construction $W_1(z)=0$ when $z\in\partial B$, as that corresponds to the set where $\pi(z)_n=0$ and we have $V(a)=0$.
This section~$W_1$ satisfies the required restriction property where it is defined:
Whenever $t\in[a,b]$ satisfies $\sigma(t)\in\Omega_1$, we have $W_1(\sigma(t))=V(t)$.

Similarly, there is a neighborhood~$\Omega_2$ of $(\gamma(b),\sigma(b))\in B$ and a local section $W_2\colon\Omega_2\to E$ with the same property:
Whenever $t\in[a,b]$ satisfies $\sigma(t)\in\Omega_2$, we have $W_2(\sigma(t))=V(t)$.

In addition to satisfying the restriction property, both of these local sections~$W_1$ and~$W_2$ of~$E$ vanish on the boundary~$\partial B$ when defined there.
The point of the construction in~\eqref{eq:W-ext-def} is to ensure that the local extension vanishes on the boundary.

For any $t\in(a,b)$ it is easy to provide local extensions as~$\sigma$ has no self-intersections and there are no boundary conditions to worry about.
Using compactness of $\sigma([a,b])$ to pass to a finite subcover, we find sets $\Omega_3,\dots,\Omega_J\subset B\setminus \partial B$ and local sections $W_j\colon\Omega_j\to E$ of~$E$ so that $W_j(\sigma(t))=V(t)$ whenever $\sigma(t)\in\Omega_j$ and $\sigma([a,b])\subset\bigcup_{j=1}^J\Omega_j$.

We also let $\Omega_0=B\setminus\sigma([a,b])$ and let $W_0\colon\Omega_0\to E$ be the zero section.
The vector field~$W_0$ has the same boundary conditions and restriction properties as the other~$W_j$s but for trivial reasons.

The sets $\Omega_0,\dots,\Omega_J$ are an open cover of the smooth manifold~$B$ with boundary~$\partial B$.
Let the functions $\psi_0,\dots,\psi_J\in C_c^\infty(B)$ be a partition of unity subordinate to this cover in the sense that each~$\psi_j$ is supported in~$\Omega_j$ and $\sum_{j=0}^J\psi_j(z)=1$ for all $z\in B$.
The functions $B\to E$ defined by $\psi_j(z)W_j(z)$ are smooth (interpreted to be zero outside~$\Omega_j$ where~$W_j$ is defined) and the global smooth section $W\colon B\to E$ given by
\begin{equation}
W(z)
=
\sum_{j=0}^J
\psi_j(z)W_j(z)
\end{equation}
is quickly verified to have all the required properties.
\end{proof}

\bibliographystyle{alpha}
\bibliography{references}

\end{document}